\numberwithin{equation}{section}
\theoremstyle{plain}
\newtheorem{theorem}{Theorem}[section]
\newtheorem{lemma}[theorem]{Lemma}
\newtheorem{corollary}[theorem]{Corollary}
\newtheorem{proposition}[theorem]{Proposition}
\newtheorem*{conjecture}{Conjecture A}
\newtheorem*{remark}{Remark}
\title[Nontrivial rational points on Erd\H{o}s-Selfridge curves]{Nontrivial rational points on Erd\H{o}s-Selfridge curves}
\author{Kyle Pratt}
\address{Brigham Young University, Department of Mathematics, Provo, UT 84602, USA}
\email{kyle.pratt@mathematics.byu.edu}
\subjclass[2010]{}
\keywords{}
\begin{document}
\date{}

\maketitle

\begin{abstract}
We study rational points on the Erd\H{o}s-Selfridge curves 
\begin{align*}
y^\ell = x(x+1)\cdots (x+k-1),
\end{align*}
where $k,\ell\geq 2$ are integers. These curves contain ``trivial'' rational points $(x,y)$ with $y=0$, and a conjecture of Sander predicts for which pairs $(k,\ell)$ the curve contains ``nontrivial'' rational points where $y\neq 0$. 

Suppose $\ell \geq 5$ is a prime. We prove that if $k$ is sufficiently large and coprime to $\ell$, then the corresponding Erd\H{o}s-Selfridge curve contains only trivial rational points. This proves many cases of Sander's conjecture that were previously unknown. The proof relies on combinatorial ideas going back to Erd\H{o}s, as well as a novel ``mass increment argument'' that is loosely inspired by increment arguments in additive combinatorics. The mass increment argument uses as its main arithmetic input a quantitative version of Faltings's theorem on rational points on curves of genus at least two.
\end{abstract}

\section{Introduction}

There is a venerable tradition in number theory of finding integer or rational points on curves (see e.g. \cite{Dic1966}). Many interesting arithmetic questions can be stated naturally in terms of points on curves. Consider, for instance, the well-known parameterization of Pythagorean triples in terms of rational points on the unit circle $x^2+y^2=1$, or the relationship between Fermat's Last Theorem and rational points on the Fermat curves $x^\ell+y^\ell = 1$.

Liouville \cite{Lio1857} asked in 1857 whether the product of consecutive positive integers is ever a perfect power. This is equivalent to determining whether there are positive integers $x,y$ on the curve
\begin{align}\label{eq:ES curve}
y^\ell = x(x+1) \cdots (x+k-1), \ \ \ \ \ \ k,\ell \geq 2.
\end{align}
The curve \eqref{eq:ES curve} clearly contains ``trivial'' integral points where $y=0$. More than one hundred years after Liouville posed his question, Erd\H{o}s and Selfridge \cite{ErdSel1975} solved the problem by showing \eqref{eq:ES curve} contains only these trivial integral points. In contrast to previous work \cite{Erd1939,Rig1940} on the problem, which had utilized deep techniques, the solution of Erd\H{o}s and Selfridge is essentially elementary (see also \cite{Erd1955}).

Beyond the study of \emph{integral} points, it is natural to require a complete classification of the \emph{rational} points on the Erd\H{o}s-Selfridge curve \eqref{eq:ES curve}. Note that \eqref{eq:ES curve} is an affine model of a smooth projective curve which has genus $1 + \frac{1}{2}\left(k\ell - \ell - k - \text{gcd}(\ell,k) \right)$ (see \cite[p. 87, Exercise A.4.6]{HinSil2000}). It follows the genus is $\geq 2$ if $k+\ell \geq 7$. By Faltings's celebrated theorem \cite{Fal1983}, the curve \eqref{eq:ES curve} then has only finitely many rational points. One may therefore list, at least in principle, the finitely many rational points on the Erd\H{o}s-Selfridge curve when $k+\ell \geq 7$.

As noted above, the Erd\H{o}s-Selfridge curve contains the trivial rational points $(x,y) = (-n,0)$ where $0\leq n \leq k-1$. It is natural to wonder whether there are ``nontrivial'' rational points on the curve, i.e. points with $y\neq 0$. Sander \cite{San1999} studied the curves \eqref{eq:ES curve} for $2\leq k\leq 4$, and on the basis of his investigations conjectured that nontrivial rational points exist only in the case $(k,\ell)=(2,2)$. Bennett and Siksek \cite{BenSik2016}, relying in part on computations in \cite{BBGH2006}, noted the existence of some additional nontrivial rational points and stated a corrected version of Sander's conjecture.

\begin{conjecture}[Sander]
If $k,\ell\geq 2$ are integers, then the only rational points $(x,y) \in \mathbb{Q}^2$ on the Erd\H{o}s-Selfridge curve \eqref{eq:ES curve} satisfy $y=0$, or
\begin{itemize}
\item $(x,y,k,\ell) = \left(a^2/(b^2-a^2),ab/(b^2-a^2),2,2 \right)$, for integers $a \neq \pm b$,
\item $(x,y,k,\ell) = \left((1-2j)/2, \pm 2^{-j} \prod_{i=1}^j(2i-1),2j,2 \right)$, for $j\geq 2$ an even integer,
\item $(x,y,k,\ell) = \left(-4/3, 2/3,3,3 \right)$,
\item $(x,y,k,\ell) = \left(-2/3,-2/3,3,3 \right)$.
\end{itemize}
\end{conjecture}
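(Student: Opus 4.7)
The full conjecture splits naturally along the value of $\ell$. The first reduction is to the case $\ell$ prime: if $\ell = pm$ with $p$ prime, then $y^\ell = (y^m)^p$, so any rational point on \eqref{eq:ES curve} with exponent $\ell$ yields one with exponent $p$. It therefore suffices to prove the conjecture for each prime $\ell$, with the understanding that the listed exceptional families are only $\ell$-th powers of genuine solutions when $\ell \in \{2,3\}$.

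For $\ell = 2$ and each fixed $k$, the curve is hyperelliptic. I would combine the classical squarefree decomposition $x+i = a_i b_i^2$ with $a_i$ composed only of primes $\leq k$, a descent-based case analysis that recovers the first two infinite families in the conjecture (the second arising from the center of symmetry $x \mapsto -(x+k-1)$), and, for $k \geq 7$, a Chabauty--Coleman argument on the Jacobian when the Mordell--Weil rank is strictly less than the genus. For $\ell = 3$ I would similarly use rank-based methods on the associated curves; the four sporadic points at $(k,\ell) = (3,3)$ would be recovered by an explicit Mordell--Weil computation on the cubic $y^3 = x(x+1)(x+2)$, and $k \geq 4$ handled by a case analysis of the squarefree-modulo-cubes part of each factor $x+i$.

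For $\ell \geq 5$ prime the conjecture asserts only trivial points, and I would split into three subcases. For $k$ below an explicit bound $k_0(\ell)$, one can attack each curve individually by Chabauty--Coleman or the Mordell--Weil sieve to eliminate nontrivial rational points. When $\ell \mid k$, a short $\ell$-adic valuation argument should suffice: writing $x+i = a_i b_i^\ell$ with $a_i$ $\ell$-th power free and supported on primes $\leq k$, and examining the unique index $i_0$ with $v_\ell(x+i_0)$ maximal among the $k$ consecutive terms, one forces a contradiction with the factorization structure. The hardest range is $k$ large with $\gcd(k,\ell) = 1$. Here I would run a \emph{mass increment argument}: starting from the decomposition $x + i = a_i b_i^\ell$, apply an Erd\H{o}s-style combinatorial lemma to show that the small-prime parts $a_i$ are large on a positive proportion of indices, then feed this structural information into an explicit (Rémond-type) quantitative version of Faltings's theorem applied to an auxiliary curve cut out by fixing some of the $a_i$. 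If the Faltings bound contradicts the Erd\H{o}s-style density for $k$ large, no nontrivial point exists; if not, one passes to a subset of indices carrying more arithmetic structure and iterates, each step increasing a ``mass'' functional until the Faltings input dominates.

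The dominant obstacle is precisely this last case. The Erd\H{o}s--Selfridge combinatorics controls only the part of $x(x+1)\cdots(x+k-1)$ supported on small primes, so on its own it cannot rule out nontrivial points; converting that control into a finiteness statement requires an \emph{effective} Faltings input, and the quality of the bound on the number of rational points on a curve of genus $g$ drives the whole argument. A secondary obstacle is the small-$k$ case for $\ell \in \{2,3\}$, where one cannot avoid curve-by-curve computations (e.g.\ via explicit descent and Mordell--Weil sieving) to verify that the sporadic solutions in the conjecture are exhaustive, since no uniform argument is presently available at low exponent and low degree.
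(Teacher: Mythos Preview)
The statement you are attempting to prove is Conjecture~A (Sander's conjecture), which the paper does \emph{not} prove; it is presented as an open conjecture, and the paper's main results (Theorems~\ref{thm:main theorem} and~\ref{thm:ell = 3}) establish only very partial cases. So there is no ``paper's own proof'' to compare against, and your proposal should be read as a strategy toward an open problem rather than a proof.

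Several of your proposed steps are either open or would fail as stated. First, your large-$k$, $\gcd(k,\ell)=1$ case is exactly what the paper's mass increment argument addresses, but that argument does \emph{not} handle all primes $\ell\geq 5$: the quantitative Faltings bound (Proposition~\ref{prop:quant faltings theorem}) carries a factor $5^{\ell^4}$, which forces the restriction $\ell\leq(\log\log k)^{1/5}$ in Theorem~\ref{thm:main theorem}. Your sketch glosses over this; removing the restriction would require a dramatically sharper uniform bound on rational points. Second, your proposed treatment of $\ell\mid k$ via ``a short $\ell$-adic valuation argument'' is optimistic: the paper explicitly notes that the coprimality hypothesis is \emph{critical}, since only when $\ell\nmid k$ can one force the common difference of the underlying arithmetic progression to be a perfect $\ell$-th power (Lemma~\ref{lem:rat point on ES implies Dioph eq with ints}), and no substitute is known. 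Third, your small-$k$ and $\ell\in\{2,3\}$ programs (Chabauty--Coleman, Mordell--Weil sieve curve-by-curve) presuppose rank inequalities and computations that are not established in general; these are genuine obstructions, not routine verifications. In short, your outline correctly identifies the paper's central technique for one regime, but underestimates how far that technique falls short of the full conjecture and treats several remaining cases as tractable when they are open.
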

\begin{remark}
In the case $(k,\ell) = (2j,2)$, the conjecture as stated in \cite{BenSik2016} does not include the condition that $j\geq 2$ is even. This condition is necessary, however, since if $j\geq 2$ is odd then the points $(x,y) = \left((1-2j)/2,\pm 2^{-j}\prod_{i=1}^j(2i-1) \right)$ lie on the curve $-y^2 = x(x+1) \cdots (x+2j-1)$ rather than on \eqref{eq:ES curve}.
\end{remark}

Bennett and Siksek \cite[Theorem 1]{BenSik2016} utilized the modular method (see e.g. \cite{Sik2012}) and proved that if $k\geq 2$ and if $\ell$ is a prime such that \eqref{eq:ES curve} has nontrivial rational points, then $\ell < \exp(3^k)$. Obtaining an upper bound on $\ell$ in terms of $k$ is significant, since work of Darmon and Granville \cite{DarGra1995} then implies that \eqref{eq:ES curve} has at most finitely many solutions for every fixed $k\geq 3$. Das, Laishram, and Saradha \cite{DLS2018}, Edis \cite{Edis2019}, and Saradha \cite{Sar2019} proved similar results when the right-hand side of \eqref{eq:ES curve} has some terms $x+i$ deleted. Das, Laishram, Saradha, and Sharma \cite{DLSS2023} studied these variants for small $k$. Saradha \cite{Sar2021} also studied the size of the denominator of a nontrivial rational point on \eqref{eq:ES curve}.

The study of rational points on Erd\H{o}s-Selfridge curves is related to a more general Diophantine problem. We observe that the right-hand side of \eqref{eq:ES curve} is the product of $k$ consecutive terms in an arithmetic progression with common difference equal to one. One may consider more general arithmetic progressions and ask whether there are any solutions to the equation
\begin{align}\label{eq:generalized Erdos conj}
y^\ell = n(n+d)(n+2d) \cdots (n+(k-1)d), \ \ \ \ \ \ \text{gcd}(n,d)=1,
\end{align}
in, say, positive integers $n,k,d,y,\ell$ with $k,\ell\geq 2$. (We require $\text{gcd}(n,d)=1$ in order to rule out artificial solutions.) There is a conjecture, usually ascribed to Erd\H{o}s \cite[p. 356]{BenSik2020}, that \eqref{eq:generalized Erdos conj} has no solutions if $k\geq k_0$ is sufficiently large. Note that \eqref{eq:generalized Erdos conj} has infinitely many solutions when $(k,\ell) = (3,2)$.

A number of authors have worked on \eqref{eq:generalized Erdos conj} and subsequent generalizations (see the survey by Shorey \cite{Sho2006} for an introduction to the extensive literature). In particular, the conjecture is known to hold when additional restrictions are imposed. For instance, the conjecture holds if $\ell$ and the number of prime factors of $d$ are fixed \cite[Corollary 3]{ShoTij1990}, and if $k$ is small and fixed \cite{BBGH2006,HLST2007,GHP2009,HTT2009,HK2011}. For somewhat larger $k$, it is known that \eqref{eq:generalized Erdos conj} has at most finitely many solutions \cite{Ben2018}. In important recent work, Bennett and Siksek \cite{BenSik2020} used the modular method and many ingenious arguments to prove that if \eqref{eq:generalized Erdos conj} does have a solution with prime exponent $\ell$ and $k$ sufficiently large, then $\ell < \exp(10^k)$.

By writing rational numbers as quotients of integers and rearranging, the study of rational points on \eqref{eq:ES curve} reduces to studying equations such as \eqref{eq:generalized Erdos conj}. In the case where $\ell$ and $k$ are coprime, one reduces to a Diophantine equation like \eqref{eq:generalized Erdos conj} where the modulus of the arithmetic progression is an $\ell$-th power. We heavily exploit this structure in the present work.

All present results covering \eqref{eq:ES curve} and \eqref{eq:generalized Erdos conj} allow the unfortunate possibility that, for a fixed integer $\ell \geq 5$, say, the curve \eqref{eq:ES curve} has nontrivial rational points for every large $k$. We view this as a ``horizontal'' regime of Sander's conjecture. This contrasts with the ``vertical'' regime studied by Bennett and Siksek \cite{BenSik2016}, in which, for a given $k$, they rule out extremely large values of $\ell$. We partially remedy this defect in the present paper, and prove Sander's conjecture in many cases when $k$ is large compared to $\ell$.

\begin{theorem}\label{thm:main theorem}
Let $k$ be a sufficiently large positive integer, and let $\ell$ be a prime with
\begin{align*}
5\leq \ell \leq (\log \log k)^{1/5}.
\end{align*}
If $\textup{gcd}(k,\ell)=1$, then any rational point $(x,y)\in \mathbb{Q}^2$ on the Erd\H{o}s-Selfridge curve
\begin{align*}
y^\ell = x(x+1)\cdots (x+k-1)
\end{align*}
has $y=0$.
\end{theorem}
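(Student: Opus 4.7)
The plan is to argue by contradiction, starting from a hypothetical nontrivial rational point $(x_0, y_0) \in \mathbb{Q}^2$. Writing $x_0 = a/q$ in lowest terms with $q \geq 1$, a standard manipulation that exploits $\gcd(k,\ell) = 1$ forces $q$ itself to be an $\ell$-th power, say $q = w^\ell$, and yields the Diophantine equation
\[
u^\ell = \prod_{i=0}^{k-1}(a+iq), \qquad \gcd(a,q) = 1,
\]
for some nonzero integer $u$. Factor each term as $a + iq = a_i b_i^\ell$ with $a_i$ being $\ell$-th power free. The key Erd\H{o}s-type observation is that every prime factor of each $a_i$ is at most $k$: a prime $p > k$ can divide at most one factor $a+iq$, so it appears there to a multiple of $\ell$ and therefore does not appear in the $\ell$-th power free part $a_i$. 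Moreover, no prime divides both $q$ and some $a_i$, and the product $\prod_i a_i$ lies in $(\mathbb{Q}^*)^\ell$. Elementary $p$-adic counts further bound $\sum_i v_p(a_i)$ in terms of $k/p$, which controls the multiplicative structure of the $a_i$'s modulo $\ell$-th powers.

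With this setup in place, the mass increment argument takes over. For each $\ell$-element subset $S \subseteq \{0,\ldots,k-1\}$ such that $\prod_{i \in S} a_i \in (\mathbb{Q}^*)^\ell$, the specific value of $a$ produces a rational point on the superelliptic curve
\[
C_S \colon Y^\ell = \prod_{i \in S}(X + iq),
\]
which has genus $\geq 2$ whenever $\ell \geq 5$. A quantitative version of Faltings's theorem provides an explicit upper bound on $\#C_S(\mathbb{Q})$ in terms of $\ell$ and $q$, which is typically much smaller than the number of available candidate subsets $S$. The plan is then to define a nonnegative ``mass'' functional $M$ attached to the configuration of $a_i$'s, measuring its complexity in the group $\mathbb{Q}^*/(\mathbb{Q}^*)^\ell$ generated by the primes $p \leq k$, and to argue in increment fashion that, as long as $M$ is sufficiently large, one can locate a subset $S$ whose Faltings constraint forces a simplification of the configuration, strictly changing $M$ by a definite amount. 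Iterating, $M$ must eventually violate a trivial bound coming from the fact that $a$ is a fixed nonzero rational, producing the desired contradiction.

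The hard part is designing the mass functional and the increment step so that the three ingredients --- the Erd\H{o}s combinatorics of smooth numbers, the group-theoretic structure of the $a_i$'s inside $\mathbb{Q}^*/(\mathbb{Q}^*)^\ell$ (which has rank at most $\pi(k)$), and the quantitative Faltings bound --- interact coherently. A particular difficulty is that quantitative Faltings-type bounds typically depend unfavorably on the genus and on the arithmetic of the curve, yet the iteration may need to be carried out a large number of times while accumulated losses remain manageable. This budgeting presumably explains the explicit restriction $\ell \leq (\log\log k)^{1/5}$: only when $\ell$ is small enough relative to $k$ can the Faltings-type bounds and the combinatorial gains remain in productive balance throughout the iteration.
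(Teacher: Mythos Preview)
Your high-level shape is right --- reduce to an integer equation, factor the terms, and feed some auxiliary curves into a quantitative Faltings bound inside an iteration --- but the concrete mechanism you propose does not match the paper's and, more importantly, would not work as stated.

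The central issue is your choice of auxiliary curve. You want to apply Faltings to
\[
C_S \colon Y^\ell = \prod_{i \in S}(X + iq),
\]
but the height of $C_S$ depends on $q = d^\ell$, and there is no a priori control on $d$. The quantitative Faltings bound used in the paper (R\'emond's theorem, stated as Proposition~\ref{prop:quant faltings theorem}) gives at most $\exp\bigl(5^{\ell^4}(\log 3H)(\log\log 3H)\bigr)$ rational points, where $H$ is the coefficient height; with $H$ of size $q^{O(\ell)}$ this bound is worthless. The paper never applies Faltings to curves whose height involves $d$. Instead it exploits the identity
\[
a_i t_i^\ell - a_j t_j^\ell = (i-j) d^\ell
\]
to produce rational points $(t_i/d, t_j/d)$ on ternary curves $a_i X^\ell - a_j Y^\ell = i-j$, whose coefficients are bounded by $k$ alone. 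At the endgame a further GCD argument (Proposition~\ref{prop:dense sequence has large gcds}) brings the coefficients down to absolute constants. Nothing of this sort appears in your plan.

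The ``mass'' in the paper is not a functional on $\mathbb{Q}^*/(\mathbb{Q}^*)^\ell$; it is simply the cardinality $\#\{a_i : a_i < k\}$, where $a_i$ is the part of $n + id^\ell$ supported on primes below $k$ (not the $\ell$-th-power-free part you take). The increment is driven by an Erd\H{o}s lemma bounding $\prod_{i \in I} a_i$ by $k!$ and a Stirling computation: if many $a_i < k$ are \emph{distinct}, the count of $i$ with $a_i < k$ must go up. The role of Faltings inside the iteration is only to rule out excessive \emph{collisions} $a_i = a_j$, which would place many points on a single curve $X^\ell + Y^\ell = A_0$ with $|A_0| \ll (\log k)^{O(1)}$. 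Your proposed increment --- locating subsets $S$ and ``simplifying the configuration'' --- is left entirely unspecified, and it is not clear what would play the role of the distinctness/Stirling trade-off.

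In short: the reduction is fine, but the auxiliary curves, the mass functional, and the increment step are all different from what the paper does, and your versions either have uncontrolled parameters (the $q$-dependence) or are not actually defined.
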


Theorem \ref{thm:main theorem} and the aforementioned work of Bennett and Siksek \cite{BenSik2016} immediately yield the following corollary.

\begin{corollary}
Let $k$ be a sufficiently large positive integer, and assume Conjecture A is false for some prime $\ell \geq 2$ with $\textup{gcd}(k,\ell)=1$. Then $\ell \leq 3$, or
\begin{align*}
(\log \log k)^{1/5} < \ell < \exp(3^k).
\end{align*}
\end{corollary}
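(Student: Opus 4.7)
The plan is to deduce the corollary directly by combining Theorem \ref{thm:main theorem} with the theorem of Bennett and Siksek quoted in the introduction. This is purely a bookkeeping argument that splits on the size of $\ell$; I do not expect any genuine obstacle, since all the substantive work is already packaged in the two input results.

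First I would unpack the hypothesis. Conjecture A being false for some prime $\ell \geq 2$ with $\gcd(k,\ell) = 1$ means the Erd\H{o}s-Selfridge curve $y^\ell = x(x+1)\cdots(x+k-1)$ carries some rational point $(x,y) \in \mathbb{Q}^2$ with $y \neq 0$ that is not one of the sporadic points listed in the conjecture. In particular, and this is all that will actually be used, the curve admits a \emph{nontrivial} rational point, i.e.\ one with $y \neq 0$.

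Next I would apply the upper bound of Bennett and Siksek \cite{BenSik2016}: for any $k \geq 2$ and any prime $\ell$, if \eqref{eq:ES curve} has a nontrivial rational point then $\ell < \exp(3^k)$. This immediately yields the upper bound $\ell < \exp(3^k)$ in our situation.

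Finally I would apply the contrapositive of Theorem \ref{thm:main theorem} to rule out the middle range. That theorem says that for $k$ sufficiently large, any prime $\ell$ in the interval $[5,(\log\log k)^{1/5}]$ coprime to $k$ forces all rational points to have $y=0$. Since our $\ell$ is coprime to $k$ and produces a point with $y \neq 0$, we conclude $\ell \notin [5,(\log\log k)^{1/5}]$. Hence either $\ell < 5$, in which case the primality of $\ell$ forces $\ell \in \{2,3\}$ and so $\ell \leq 3$, or $\ell > (\log\log k)^{1/5}$. Combining this dichotomy with the Bennett-Siksek upper bound gives exactly the stated conclusion $\ell \leq 3$ or $(\log\log k)^{1/5} < \ell < \exp(3^k)$.
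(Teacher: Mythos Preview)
Your proposal is correct and matches the paper's approach exactly: the paper does not give a separate proof but simply states that the corollary follows immediately from Theorem \ref{thm:main theorem} together with the Bennett--Siksek bound $\ell < \exp(3^k)$. Your write-up just makes this immediate deduction explicit.
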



The hypothesis $\text{gcd}(k,\ell)=1$ in Theorem \ref{thm:main theorem} is critical for our work. It is crucial for our method that the common difference of the arithmetic progression in \eqref{eq:generalized Erdos conj} is an $\ell$-th power, and we are only able to guarantee this condition in the event that $\ell \nmid k$.

The assumption that $\ell$ is a prime is not vital, and is made only for convenience. It would be possible, for instance, to prove a version of Theorem \ref{thm:main theorem} with $\ell=4$.

Our method would extend to studying rational points on variants of Erd\H{o}s-Selfridge curves where some terms $x+i$ are deleted. We leave the details to the interested reader.

Theorem \ref{thm:main theorem} does not apply to the cases $\ell=2$ or $3$. This is because our method requires auxiliary curves of the form
\begin{align*}
AX^\ell + BY^\ell = C, \ \ \ \ \ \ A,B, C \in \mathbb{Z}\backslash \{0\},
\end{align*}
to have genus at least two. This holds if $\ell \geq 5$, but not if $\ell \leq 3$. However, we are able to obtain a partial result for $\ell=3$. While we cannot rule out the existence of nontrivial rational points, we can show that any such point must have large height. Observe that if $3 \nmid k$, then any nontrivial rational point on the curve 
\begin{align*}
y^3 = x(x+1)\cdots (x+k-1)
\end{align*}
necessarily has the form $(x,y) = (a/d^3,b/d^k)$ with $a,b \in \mathbb{Z}\backslash\{0\}$, $d\geq 1$ a positive integer, and $\textup{gcd}(a,d)=\textup{gcd}(b,d)=1$.

\begin{theorem}\label{thm:ell = 3}
Let $k$ be a sufficiently large positive integer with $3 \nmid k$. Assume there is a nontrivial rational point $(x,y)=(a/d^3,b/d^k)$ on the curve $y^3 = x(x+1) \cdots (x+k-1)$. Then there is an absolute constant $c>0$ such that $d \geq \exp \left(k^{c/\log \log k} \right)$.
\end{theorem}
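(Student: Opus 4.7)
My plan is to adapt the proof of Theorem~\ref{thm:main theorem}, tracking quantitative dependencies on the denominator $d$, and to replace the quantitative Faltings input with a quantitative bound on rational points of bounded height on the auxiliary elliptic curves that arise when $\ell=3$. Writing $(x,y)=(a/d^3,b/d^k)$ with $\gcd(a,d)=1$, the hypothesis that $(x,y)$ lies on the Erd\H{o}s--Selfridge curve becomes the integer identity $b^3 = \prod_{i=0}^{k-1}(a+id^3)$, so the product of a $k$-term arithmetic progression with common difference $d^3$ is a perfect cube. The Erd\H{o}s-style combinatorial analysis underlying Theorem~\ref{thm:main theorem} then decomposes each factor as $a+id^3 = A_i x_i^3$, with $A_i$ cubefree, supported on primes that are small relative to $k$, and small in absolute value for most $i$.

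The mass increment argument uses these decompositions to produce many rational points on auxiliary cubic curves of the shape $A_i X^3 - A_j Y^3 = (i-j)d^3$. For $\ell\geq 5$ the corresponding curves have genus at least two, and the argument concludes via the quantitative version of Faltings's theorem. For $\ell=3$ the curves are elliptic, so uniform finiteness is unavailable. However, the rational points we construct carry a height constraint: from $|a+id^3| \leq |a|+kd^3$ and $A_i\geq 1$ one has $|x_i| \leq (|a|+kd^3)^{1/3}$, and after a standard reduction to the regime $|a| \leq (kd)^{O(1)}$, each auxiliary point $(x_i,x_j)$ has naive height at most $(kd)^{O(1)}$. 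It therefore suffices to bound the number of rational points of this height on an elliptic curve whose coefficients are polynomially bounded in $k$ and $d$.

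For this quantitative input one combines the canonical-height asymptotic $\#\{P \in E(\mathbb{Q}) : h(P)\leq H\} \ll (\log H)^{r/2}$ with an unconditional upper bound on the Mordell--Weil rank $r$ in terms of the discriminant of $E$; alternatives include the Heath-Brown--Salberger counting bounds for rational points on cubic curves, or Baker-type estimates for $S$-integral points on cubic Thue equations. Substituting any of these into the mass increment argument and carefully balancing the number of increment rounds (which turns out to be of order $\log\log k$) against the sizes of the coefficients $A_i$ and the heights of the constructed points yields a contradiction unless $d \geq \exp(k^{c/\log\log k})$ for some absolute $c>0$. The main technical obstacle is to maintain sufficient control on the $A_i$ and on the constructed heights through many increment rounds, so that the analytic upper bound on low-height rational points on the auxiliary elliptic curves remains strictly smaller than the combinatorial count forced by the mass increment, with the resulting trade-off producing exactly the sub-exponential shape claimed in the theorem.
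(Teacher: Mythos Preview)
Your overall strategy matches the paper's: run the mass increment argument, and at each place where the $\ell\geq 5$ proof invoked quantitative Faltings on a curve $AX^\ell+BY^\ell=C$, use instead a bound on rational points of bounded canonical height on the corresponding elliptic curve, combined with a rank bound. There are, however, two genuine gaps in the proposal.

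First, the ``standard reduction to the regime $|a|\leq (kd)^{O(1)}$'' is not standard; in the paper it is Proposition~\ref{prop:ell=3 upper bound on n}, and its proof requires a separate combinatorial argument (via a multiplicative Sidon-type lemma, Lemma~\ref{lem:prods all distinct}) showing that otherwise the $a_i$ would be too spread out to satisfy $\prod_{i\in S}a_i\leq k!$. You should not wave this step away.

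Second, and more seriously, you state that the auxiliary elliptic curves have coefficients ``polynomially bounded in $k$ and $d$'' and that the $\log\log k$ in the final exponent comes from running $O(\log\log k)$ increment rounds. Both claims are wrong, and together they would break the argument. The paper's pigeonhole step (Lemma~\ref{lem:not enough distinct ai gives many points on curve}) produces, inside each increment, a curve $X^3+Y^3=A_0$ with $|A_0|\ll(\log k)^{O(1)}$, and in the endgame after Proposition~\ref{prop:dense sequence has large gcds} one gets $AX^3+BY^3=C$ with $A,B,C$ \emph{bounded}. This smallness of the coefficients is what makes the rank bound $r\ll \omega(D)+\log h_3(\mathbb{Q}(\sqrt{-D}))\ll\log\log k$ usable; combined with the point count $\ll (H/L)^{r/2}$ (Proposition~\ref{prop:count points in MW lattice}) and the height bound $H\ll(\log k)(\log 2d)$, this is precisely where the shape $d\geq\exp(k^{c/\log\log k})$ emerges. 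If the coefficients were only polynomial in $k$, the discriminant-based rank bound would give $r\ll\log k$, and $(\log(kd))^{r/2}$ would swamp the combinatorial lower bound of $k/(\log k)^{O(1)}$. The number of increment rounds is actually $O(\log k)$, not $O(\log\log k)$; it plays no role in the final exponent.
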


Theorem \ref{thm:ell = 3} improves upon an earlier result of Shorey and Tijdeman \cite[Corollary 4]{ShoTij1990} which implies $d\geq k^{c\log \log k}$.

The structure of the rest of the paper is as follows. We fix notation and conventions in section \ref{sec:notation}. In section \ref{sec:outline}, we sketch the proofs of Theorems \ref{thm:main theorem} and \ref{thm:ell = 3}. We make important preliminary steps in section \ref{sec:key props} and state the four key propositions on which Theorem \ref{thm:main theorem} depends. Section \ref{sec:proof of some of key props} contains the proofs for three of the four key propositions, those whose proofs are somewhat straightforward. The last of the key propositions is proved in section \ref{sec:mass increment}. In the final section of the paper, section \ref{sec:ell = 3}, we prove Theorem \ref{thm:ell = 3} by making appropriate modifications and proving some new supplementary results.

\section{Notation}\label{sec:notation}

In what follows, $\ell$ always denotes a prime with $\ell \geq 3$, and usually $\ell \geq 5$. The integer $k$ is positive and sufficiently large. Given two integers $m$ and $n$, not both zero, we write $\text{gcd}(m,n)$ for the greatest common divisor of $m$ and $n$, which is the largest positive integer which divides both of them. We write $m \mid n$ if $m$ divides $n$, and $m \nmid n$ if $m$ does not divide $n$. If $m$ and $n$ are coprime integers, then we write $\overline{m}\pmod{n}$ for the residue class modulo $n$ such that $m \overline{m} \equiv 1 \pmod{n}$.

Given a prime number $p$ and a nonzero integer $n$, we may write $n = p^r m$, where $\text{gcd}(m,p)=1$, and we write $v_p(n) = r$. We define $\omega(n)$ to be the number of distinct prime factors of $n$. The number of primes $\leq x$ is $\pi(x)$.

We say that $f \ll g$ or $f = O(g)$ if there is an absolute constant $C>0$ such that $f\leq Cg$. We write $f \asymp g$ if $f \ll g$ and $g \ll f$. Occasionally we write $O_A(\cdot)$, say, if the implied constant depends on some other quantity $A$. We write $o(1)$ for a quantity which tends to zero as some other parameter, usually $k$, tends to infinity.

We write $|S|$ or $\#S$ for the cardinality of a finite set $S$.

Given a rational number $a/q$ with $\text{gcd}(a,q)=1$ and $q\geq 1$, we write $H(a/q) = \max(|a|,|q|)$ for the naive height of $a/q$. If $P = (x,y)$ is a point on an elliptic curve $E/\mathbb{Q}$, we write $H(P) = H(x)$ for the naive height of $P$, $h(P) = \log H(P)$ for the logarithmic or Weil height of $P$, and $\hat{h}(P)$ for the canonical height of $P$. We refer the reader to a standard source \cite[section VIII.9]{Sil2009} for additional details.

\section{Outline of the proofs}\label{sec:outline}

Theorems \ref{thm:main theorem} and \ref{thm:ell = 3} have many ingredients in common. We first sketch the proof of Theorem \ref{thm:main theorem}, and then describe some of the necessary modifications for Theorem \ref{thm:ell = 3}.

We begin the proof of Theorem \ref{thm:main theorem} by assuming for contradiction that \eqref{eq:ES curve} has a rational point with $y\neq 0$. We assume $\ell \geq 5$, and that $k$ is large and coprime to $\ell$. Since $\text{gcd}(k,\ell)=1$, this implies (Lemma \ref{lem:rat point on ES implies Dioph eq with ints}) there is a solution to
\begin{align*}
n(n+d^\ell)(n+2d^\ell)\cdots (n+(k-1)d^\ell) = t^\ell
\end{align*}
in nonzero integers $n,d,t$ with $\text{gcd}(n,d)=1$ and $d\geq 1$. This, in turn, gives rise to factorizations
\begin{align}\label{eq:factorization of n plus i d ell}
n+id^\ell = a_i t_i^\ell,
\end{align}
where the $a_i$ are positive integers with all their prime factors $< k$, and $t_i$ is divisible only by primes $\geq k$ (Lemma \ref{lem:factorization of n plus i d ell}). The central focus of our method is to study the set of integers $a_i$. In particular, we prove that the set $\{a_i : a_i < k\}$ has positive density in the interval $[1,k-1]$.

We rely on the simple identity
\begin{align}\label{eq:ternary equation}
a_it_i^\ell - a_jt_j^\ell=(n+id^\ell) - (n+jd^\ell)= (i-j)d^\ell.
\end{align}
The equality of the left- and right-hand sides yields a generalized Fermat equation of signature $(\ell,\ell,\ell)$. In contrast to many previous works, however, we do not rely on modular approaches. Rather, our method relies on combinatorial arguments, and a version of Faltings's theorem provides the necessary arithmetic input.

We have little initial control over the size of the integers $a_i$, but by an elementary argument of Erd\H{o}s (Lemma \ref{lem:subset divides factorial}) we can find a set of indices $i \in I\subseteq \{0,1,\ldots,k-1\}$ with $|I| =(1-o(1))k$ such that
\begin{align}\label{eq:prod of ai divides factorial}
\prod_{i \in I} a_i \leq k!.
\end{align}
This provides some weak but sufficient control on the $a_i$. In particular, since \eqref{eq:ternary equation} implies the integers $a_i \geq k$ are distinct, an argument with \eqref{eq:prod of ai divides factorial} and Stirling's formula implies there must be $\gg k/\log k$ indices $i \in I$ such that $1\leq a_i < k$ (Lemma \ref{lem:spark for mass inc, many i with small ai}). If many of these $a_i < k$ were distinct from one another, then it might be possible to carry out an iterative procedure. To illustrate, note that from \eqref{eq:prod of ai divides factorial} we have
\begin{align*}
\prod_{\substack{i \in I \\ a_i < k}} a_i \cdot \prod_{\substack{i \in I \\ a_i \geq k}} a_i \leq k!,
\end{align*}
so if many $a_i < k$ are distinct then the first product is large, and this implies an upper bound on the second product which is smaller (i.e. better) than $k!$. This implies the existence of more indices $i \in I$ with $a_i < k$ than we had before. If, again, many of these $a_i < k$ were distinct, we could repeat the process and obtain yet another improvement. This is the basic idea of the \emph{mass increment argument}, which was loosely inspired by various increment arguments in additive combinatorics (see \cite[\S 1.10]{TaoEoR2010}, \cite[Chapters 10 and 11]{TaoVu2006}). 

The quantity of interest in the mass increment argument is the cardinality or ``mass'' of the set $\{a_i : a_i < k\}$. The main obstacle in carrying out this mass increment argument is the possibility that there are many collisions $a_i = a_j$ among the $a_i,a_j < k$.

If many of the $a_i < k$ are equal to each other, then by a pigeonhole argument (Lemma \ref{lem:not enough distinct ai gives many points on curve}) there is some auxiliary curve $X^\ell + Y^\ell = A$ which contains many rational points. The nonzero integer $A$ has size $|A| \ll (\log k)^{O(1)}$, and the curve must contain $\gg k^{1-o(1)}$ rational points. Thus, a low-height curve of genus $\geq 2$ contains a large number of rational points; we rule out this scenario by using a quantitative version of Faltings's theorem (Proposition \ref{prop:quant faltings theorem}).

The mass increment argument shows there are $\gg k$ distinct integers $a_i < k$ (Proposition \ref{prop:ai < k have pos density}). Hence, the set $\{a_i : a_i < k\}$ has positive density in $[1,k-1]$. The important consequence of this structure is that sets of integers of positive density contain many pairs having large greatest common divisor. More quantitatively, by an argument of Erd\H{o}s (Proposition \ref{prop:dense sequence has large gcds}), we can find $\gg k$ pairs $a_i\neq a_j$ with $\text{gcd}(a_i,a_j) \gg k$. Applying \eqref{eq:ternary equation} and another pigeonholing argument then implies that some curve of the form $AX^\ell + BY^\ell = C$ contains $\gg k^{1-o(1)}$ rational points, where $A,B,C$ are bounded, nonzero integers (the size of $|A|,|B|,|C|$ is bounded by an absolute constant independent of $k$ and $\ell$). This contradicts Faltings's theorem when $k$ is sufficiently large.

The rational points $(X,Y)$ which lie on the auxiliary curves $X^\ell+Y^\ell = A$ or $AX^\ell + BY^\ell = C$ are essentially of the form $(X,Y) = (t_i/d,t_j/d)$ (recall \eqref{eq:factorization of n plus i d ell}). To show these generate many distinct rational points, we need to bound the number of $i$ with $|t_i|=1$. This can be accomplished with elementary arguments (Proposition \ref{prop:not many ti are equal to one}).

For the proof of Theorem \ref{thm:ell = 3}, we assume for contradiction that there is a nontrivial rational point $(x,y)=(a/d^3,b/d^k)$ on the curve \eqref{eq:ES curve} with $\ell=3$ and $d\leq \exp (k^{\delta/\log \log k})$; here $\delta>0$ is some sufficiently small absolute constant. We still use a mass increment argument (Proposition \ref{prop:mass inc for ell = 3}), but now the crux of the proof hinges on a curve $X^3+Y^3=A$ having $\gg k^{1-o(1)}$ rational points of (naive) height $\ll (dk)^{O(1)}$. The nonzero integer $A$ has size $\ll (\log k)^{O(1)}$. At this point, an elementary argument based on the divisor bound recovers only the result of Shorey and Tijdeman mentioned in the introduction.

We exploit the arithmetic of elliptic curves to do better. The main input is a uniform upper bound on the number of rational points with bounded canonical height on an elliptic curve (Proposition \ref{prop:count points in MW lattice}). The properties of the canonical height imply that rational points cannot cluster too much in the Mordell-Weil lattice, so if an elliptic curve of small rank has $\gg k^{1-o(1)}$ rational points with naive height $\ll (dk)^{O(1)}$, then $d$ must be large. The small size of the integer $A$ and the particular form of the elliptic curve which arises allow us to obtain suitable control on the rank.

\section{Initial steps, and the key propositions}\label{sec:key props}

The following standard lemma transforms the problem of studying nontrivial rational points on the Erd\H{o}s-Selfridge curves into a related Diophantine problem in integers.

\begin{lemma}[Diophantine equation from nontrivial points]\label{lem:rat point on ES implies Dioph eq with ints}
Let $\ell \geq 3$ be a prime, and let $k \geq 2$ be a positive integer with $\text{gcd}(k,\ell)=1$. Assume there is a rational point $(x,y)$ on \eqref{eq:ES curve} with $y\neq 0$. Then there exist nonzero integers $n$ and $t$ and a positive integer $d$ with $\textup{gcd}(n,d)=1$ such that
\begin{align}\label{eq:prod over AP is ellth power}
\prod_{i=0}^{k-1} (n+id^\ell) = t^\ell.
\end{align}
\end{lemma}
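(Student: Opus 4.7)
The plan is to clear denominators in the defining equation and then use coprimality together with the hypothesis $\gcd(k,\ell)=1$ to extract an $\ell$th power from the denominator of $x$. Write $x=a/q$ and $y=b/c$ in lowest terms with $q,c\geq 1$. Substituting into the curve equation \eqref{eq:ES curve} and multiplying through by $q^k c^\ell$ yields
\[
b^\ell q^k \;=\; c^\ell \prod_{i=0}^{k-1}(a+iq).
\]
Since $\gcd(a,q)=1$, each factor $a+iq$ is coprime to $q$, so $q^k\mid c^\ell$; conversely $\gcd(b,c)=1$ forces $c^\ell\mid q^k$. Combining these (and using that both sides are positive) gives the identification $c^\ell = q^k$.

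The key step is to invoke $\gcd(k,\ell)=1$. Comparing $p$-adic valuations in $c^\ell = q^k$ shows $\ell\mid v_p(q)$ for every prime $p$, so there is a unique positive integer $d$ with $q=d^\ell$, and then $c=d^k$. Substituting these back into the displayed identity and cancelling the common factor $q^k = d^{k\ell}$ gives
\[
\prod_{i=0}^{k-1}(a+id^\ell) \;=\; b^\ell.
\]
Setting $n:=a$ and $t:=b$, we obtain exactly \eqref{eq:prod over AP is ellth power}. To verify the remaining assertions: $t\neq 0$ since $y\neq 0$, so the product is nonzero, and in particular its $i=0$ factor shows $n\neq 0$; and $\gcd(n,d)=1$ follows from $\gcd(a,d^\ell)=\gcd(a,q)=1$.

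There is no substantive obstacle; the entire argument is elementary bookkeeping with $p$-adic valuations. The one subtle point worth flagging is that the hypothesis $\gcd(k,\ell)=1$ is deployed in exactly one place — the passage from $c^\ell=q^k$ to $q$ being a pure $\ell$th power — and this is precisely the structural feature (a common difference that is an $\ell$th power) which the rest of the paper exploits. Without the coprimality hypothesis one would only be able to conclude that $q$ is a perfect $(\ell/\gcd(k,\ell))$th power, which is insufficient for the later arguments.
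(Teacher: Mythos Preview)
Your proof is correct and follows essentially the same route as the paper's: clear denominators, use the two coprimality conditions to force $c^\ell=q^k$, then invoke $\gcd(k,\ell)=1$ to write $q=d^\ell$ (and $c=d^k$), and read off \eqref{eq:prod over AP is ellth power}. The only differences are cosmetic (variable names and the explicit mention of $p$-adic valuations), and your closing remark about where the coprimality hypothesis enters is accurate and matches the paper's later commentary.
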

\begin{proof}
Let $(x,y) = (n/b,t/u)$ be a rational point on \eqref{eq:ES curve} with $t/u \neq 0$. We may assume $nt \neq 0$, $\text{gcd}(n,b) = \text{gcd}(t,u)=1$, and $b,u\geq 1$. We insert these expressions for $x$ and $y$ into \eqref{eq:ES curve} and then multiply through by $b^ku^\ell$ to obtain
\begin{align}\label{eq:initial rat sub in ES curve}
u^\ell \prod_{i=0}^{k-1} (n+ib) = t^\ell b^k.
\end{align}
Since $u$ and $t$ are coprime we see that $u^\ell \mid b^k$, and, similarly, since $n$ and $b$ are coprime we find $b^k \mid u^\ell$. Since $b$ and $u$ are positive we deduce that $b^k = u^\ell$. The integers $k$ and $\ell$ are coprime, so there exists a positive integer $d$ such that $b = d^\ell$ and $u = d^k$. We return to \eqref{eq:initial rat sub in ES curve} to find $n(n+d^\ell) \cdots (n+(k-1)d^\ell) = t^\ell$. We have $\text{gcd}(n,d)=1$ since $b = d^\ell$ and $\text{gcd}(n,b)=1$.
\end{proof}

The next lemma is also standard (e.g. \cite[Lemma 3.1]{BenSik2020}), but critical in all that follows. It details a convenient factorization of the terms appearing in \eqref{eq:prod over AP is ellth power}.

\begin{lemma}[Factorization of terms]\label{lem:factorization of n plus i d ell}
Assume there are nonzero integers $n$ and $t$ and a positive integer $d$ with $\textup{gcd}(n,d)=1$ such that \eqref{eq:prod over AP is ellth power} holds. Then for $0\leq i \leq k-1$ there are positive integers $a_i$ and nonzero integers $t_i$ such that the following hold:
\begin{itemize}
\item $n+id^\ell$ may be factored uniquely as $n+id^\ell=a_it_i^\ell$.
\item If $p$ is a prime with $p \mid a_i$, then $p < k$.
\item If $p$ is a prime with $p \mid t_i$, then $p\geq k$.
\item $\textup{gcd}(a_i,a_j) \mid (j-i)$ for all $0 \leq i < j \leq k-1$.
\item $\textup{gcd}(a_i,d)=1$.
\item $\textup{gcd}(t_i,t_j)=1$ for all $0 \leq i < j \leq k-1$.
\item The product $\prod_{i=0}^{k-1} a_i$ is an $\ell$-th power.
\end{itemize}
\end{lemma}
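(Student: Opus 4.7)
The plan is to build $a_i$ and $t_i$ by splitting the factorization of $n+id^{\ell}$ according to whether a prime factor is small ($<k$) or large ($\geq k$), and then to verify each of the seven properties by direct computation using the hypothesis $\gcd(n,d)=1$ together with the fundamental identity
\[
(n+id^{\ell}) - (n+jd^{\ell}) = (i-j)d^{\ell}.
\]

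First I would set
\[
a_i := \prod_{p < k} p^{v_p(n+id^{\ell})}, \qquad |t_i|^{\ell} := \prod_{p \geq k} p^{v_p(n+id^{\ell})},
\]
with the sign of $t_i$ chosen to match the sign of $n+id^{\ell}$ (legitimate because $\ell$ is odd); this gives $a_i > 0$. The nontrivial point is to verify that the large-prime part is genuinely an $\ell$-th power, i.e.\ that $v_p(n+id^{\ell}) \equiv 0 \pmod{\ell}$ whenever $p \geq k$. This is the one real step, and I expect it to be the main (mild) obstacle. To establish it, suppose $p \geq k$ divides both $n+id^{\ell}$ and $n+jd^{\ell}$ for some $i \neq j$ with $0 \leq i,j \leq k-1$. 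Then $p \mid (i-j)d^{\ell}$, and since $|i-j| \leq k-1 < k \leq p$ we must have $p \mid d$; combined with $p \mid n+id^{\ell}$ this forces $p \mid n$, contradicting $\gcd(n,d)=1$. Thus any prime $p \geq k$ divides at most one of the $n+id^{\ell}$, so from $\prod_{i}(n+id^{\ell}) = t^{\ell}$ we read off $v_p(n+id^{\ell}) = v_p(t^{\ell}) = \ell\, v_p(t)$, as desired. This simultaneously shows uniqueness of the factorization and yields the first three bullets.

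The remaining bullets are then straightforward consequences of the same size-vs-difference principle applied at different primes. For $\gcd(a_i, d) = 1$: any common prime divisor $p$ would give $p \mid n+id^{\ell}$ and $p \mid d$, hence $p \mid n$, contradicting $\gcd(n,d)=1$. For $\gcd(a_i, a_j) \mid (j-i)$: if $p^{r}$ divides both $a_i$ and $a_j$, then $p^{r} \mid (i-j)d^{\ell}$; but $p < k$ and $\gcd(a_i,d) = 1$ imply $p \nmid d$, so $p^{r} \mid (j-i)$, and multiplying over primes gives the claim. For $\gcd(t_i, t_j) = 1$: a common prime divisor $p \geq k$ would again divide $(i-j)d^{\ell}$, and then exactly the argument from the preceding paragraph forces $p \mid n$, a contradiction.

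Finally, for the claim that $\prod_{i=0}^{k-1} a_i$ is an $\ell$-th power: from the definitions $\prod_i (n+id^{\ell}) = \prod_i a_i \cdot \prod_i t_i^{\ell} = t^{\ell}$, so $\prod_i a_i = \bigl(t / \prod_i t_i\bigr)^{\ell}$. The $\ell$-th power of the rational number $t/\prod_i t_i$ is a positive integer, and by unique factorization in $\mathbb{Z}$ (equivalently, because $\bigl(\prod_i t_i\bigr)^{\ell}$ divides $t^{\ell}$ forces $\prod_i t_i \mid t$) this rational is itself an integer. Hence $\prod_i a_i$ is the $\ell$-th power of an integer, completing the proof.
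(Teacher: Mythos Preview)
Your proof is correct and follows essentially the same approach as the paper's: split each $n+id^{\ell}$ into its small-prime and large-prime parts, use the difference identity together with $\gcd(n,d)=1$ to show that large primes divide at most one term (forcing the large-prime part to be an $\ell$-th power), and then read off the remaining bullets. You are in fact slightly more explicit than the paper in justifying that the large-prime part is an $\ell$-th power via valuations and that $t/\prod_i t_i$ is an integer.
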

\begin{proof}
Note that $n+id^\ell \neq 0$, since $t\neq 0$. Let $0 \leq i < j \leq k-1$ be two integers. Then $\text{gcd}(n+id^\ell,n+jd^\ell) \mid (j-i) d^\ell$, and since $\text{gcd}(n,d)=1$ it follows that $\text{gcd}(n+id^\ell,n+jd^\ell) \mid (j-i)$. Observe that $j-i \leq k-1$. 

We factor $n+id^\ell = a_iz_i$, where $a_i$ is divisible by primes $<k$ and $z_i$ is divisible by primes $\geq k$. Since $\text{gcd}(z_i,z_j) =1$ if $i \neq j$, it follows from \eqref{eq:prod over AP is ellth power} that $z_i = t_i^\ell$. Therefore, $\text{gcd}(t_i,t_j) = 1$ and $\text{gcd}(a_i,a_j)\mid (j-i)$. If we require that $a_i$ is positive (as we may since $\ell$ is odd), then the factorization $n+id^\ell = a_it_i^\ell$ is unique. Since $n+id^\ell = a_i t_i^\ell$ and $\text{gcd}(n,d)=1$, we obtain $\text{gcd}(a_i,d)=1$.

Lastly, since
\begin{align*}
t^\ell = \prod_{i=0}^{k-1} (n+id^\ell) = \prod_{i=0}^{k-1}a_i t_i^\ell = \prod_{i=0}^{k-1}a_i \cdot \left(\prod_{i=0}^{k-1}t_i \right)^\ell
\end{align*}
the product of the $a_i$ must be an $\ell$-th power.
\end{proof}

With Lemma \ref{lem:factorization of n plus i d ell} in place, we state the four key propositions upon which Theorem \ref{thm:main theorem} relies.

The first proposition is essentially due to Erd\H{o}s \cite[Lemma 1]{Erd1939}, and states that a set of integers of positive density contains many pairs which have a large greatest common divisor.

\begin{proposition}[Many pairs with large GCD]\label{prop:dense sequence has large gcds}
Let $k$ be a positive integer, and let $1 \leq b_1 < \cdots < b_r \leq k$ be positive integers. Let $c \in (0,1), \eta \in (0,1), A \geq 1$ be real constants such that 
\begin{align*}
\eta(A+1) + \prod_{p\leq A} \left(1 - \frac{1}{p}\right) \leq \frac{c}{2},
\end{align*}
where the product is over primes $p\leq A$. If $r \geq c k$ and $k\geq k_0(c,\eta,A)$, then there are $\geq \frac{c}{3}k$ distinct ordered pairs of integers $(b_i,b_j)$ with $b_i\neq b_j$ such that $\textup{gcd}(b_i,b_j) > \eta k$.
\end{proposition}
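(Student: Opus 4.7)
The plan is to combine Legendre's inclusion--exclusion sieve with a pigeonhole-style counting argument on "big divisors'' of the $b_i$, following the approach of Erd\H{o}s~\cite{Erd1939}. First, I would apply Legendre's sieve to bound the number of integers in $[1,k]$ coprime to $\prod_{p\le A}p$:
\[
\#\bigl\{n\le k:\gcd\bigl(n,\textstyle\prod_{p\le A}p\bigr)=1\bigr\}\le k\prod_{p\le A}\Bigl(1-\tfrac{1}{p}\Bigr)+2^{\pi(A)}.
\]
For $k$ sufficiently large relative to $A$, the error $2^{\pi(A)}$ is negligible. Combined with the hypothesis $\prod_{p\le A}(1-1/p)\le c/2-\eta(A+1)$, this shows that the set $T^\star$ of indices $i$ with $b_i$ having some prime factor $\le A$ satisfies $|T^\star|\ge (c/2+\eta(A+1))k-o(k)$.

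Next, I would discard from $T^\star$ the at most $\eta(A+1)k$ indices with $b_i\le\eta(A+1)k$, leaving a subset $T^{\star\star}$ of size at least $(c/2)k-o(k)$. Every element of $T^{\star\star}$ exceeds $\eta(A+1)k$ and is divisible by some prime $p\le A$. For each $b_i\in T^{\star\star}$, select any such prime $p_i$ and set $d_i:=b_i/p_i$. Then $d_i>\eta(A+1)k/A>\eta k$, so $d_i$ is a proper divisor of $b_i$ exceeding $\eta k$. This gives a list of $\gg k$ incidences (element, big divisor).

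The final step is to convert these incidences into at least $ck/3$ ordered pairs $(b_i,b_j)$ of distinct indices with $\gcd(b_i,b_j)>\eta k$. For each integer $d$ with $\eta k<d\le k$, let $M_d:=|\{i:d\mid b_i\}|$; since any multiple of $d$ in $[1,k]$ has quotient $<1/\eta$, we have $M_d\le\lfloor 1/\eta\rfloor$, and each good ordered pair shares at most $\lfloor 1/\eta\rfloor$ common divisors $>\eta k$. Thus the number of good pairs is at least $\eta\sum_{d>\eta k}M_d(M_d-1)$. A double counting, combined with a convexity (Cauchy--Schwarz) estimate on the distribution of $M_d$ over the range $(\eta k,k]$ and the abundance of incidences produced in step two, yields the required lower bound.

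The main obstacle is making this combinatorial bookkeeping quantitatively tight. The hypothesis $\eta(A+1)+\prod_{p\le A}(1-1/p)\le c/2$ is exactly what is needed to balance the two ``losses'' in the construction---the magnitude threshold $\eta(A+1)k$ and the sieve error $k\prod(1-1/p)$---against the required pair count $ck/3$; any substantive weakening would cause the argument to break at one of these steps. Since the parameters $c$, $\eta$, $A$ are held fixed while $k\to\infty$, the error terms of size $O_A(1)$ and $O(\eta(A+1)k)$ can all be absorbed by choosing $k$ large enough in terms of $c,\eta,A$, producing the stated bound of $(c/3)k$ ordered good pairs.
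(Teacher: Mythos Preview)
Your first four steps are correct and closely parallel the paper: the sieve bound and the discarding of $b_i\le\eta(A+1)k$ do yield a set $T^{\star\star}$ of size at least $(c/2)k-O_A(1)$ in which every $b_i$ has a proper divisor $d_i=b_i/p_i>\eta k$.

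The gap is in step five. The divisors $d_i$ you construct range freely over the interval $(\eta k,k/2]$, a set of roughly $k/2$ integers, while you have only about $(c/2)k$ incidences; since $c<1$ the average multiplicity is below $1$, and neither pigeonhole nor Cauchy--Schwarz forces any collisions. Concretely, with $S=\sum_{d>\eta k}M_d$ and $D$ the number of admissible $d$, Cauchy--Schwarz gives $\sum_d M_d(M_d-1)\ge S^2/D-S$, which is nonpositive unless $S>D$; here $S$ is of order $ck$ (even counting both $b_i$ and $d_i$ as big divisors one gets $S\gtrsim(3c/2)k$) while $D\approx(1-\eta)k$, so for the value $c=0.229$ actually used later the bound is vacuous. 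The claim that ``the hypothesis is exactly what is needed to balance the losses'' is therefore not borne out: the sieve has been spent on the wrong object.

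What is missing is a restriction to a \emph{small} set of canonical big divisors. The paper takes
\[
\mathcal{D}=\{d\in(\eta k,k]:\text{every proper divisor of }d\text{ is }\le\eta k\},
\]
observes that every integer in $(\eta k,k]$ has a divisor in $\mathcal{D}$, and then applies the sieve to $\mathcal{D}$ rather than to the $b_i$: any $d\in\mathcal{D}$ with $d>A\eta k$ has least prime factor exceeding $A$, whence $|\mathcal{D}|\le A\eta k+k\prod_{p\le A}(1-1/p)+O_A(1)$. The map $b_i\mapsto(\text{an element of }\mathcal{D}\text{ dividing }b_i)$ then has image of size at most $|\mathcal{D}|\le(c/2-\eta)k+o(k)$, and a direct pigeonhole (no convexity needed) gives at least $r-\eta k-|\mathcal{D}|\ge(c/3)k$ indices $b_i$ sharing their image with some $b_j$. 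Your single division $b_i\mapsto b_i/p_i$ does not land in $\mathcal{D}$; iterating the division until no proper divisor exceeds $\eta k$ would repair this and recover the paper's argument.
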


The second proposition states that if $k$ is sufficiently large, then there are few indices $i \in [0,k-1]$ such that $a_i < k$ and $t_i =\pm 1$.

\begin{proposition}[Few $t_i$ are trivial]\label{prop:not many ti are equal to one}
Let $\ell \geq 3$ be a prime, and let $k\geq 21$ be a positive integer with $\textup{gcd}(k,\ell)=1$. Assume there are nonzero integers $n$ and $t$ and a positive integer $d$ with $\textup{gcd}(n,d)=1$ such that \eqref{eq:prod over AP is ellth power} holds. Let the integers $a_i,t_i$ be as in Lemma \ref{lem:factorization of n plus i d ell}. Then there are at most 20 integers $i \in [0,k-1]$ such that $a_i < k$ and $|t_i| = 1$.
\end{proposition}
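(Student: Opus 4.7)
The first step is to translate the compound hypothesis into a single cleaner condition. I claim the conjunction ``$a_i<k$ and $|t_i|=1$'' is equivalent to $|n+id^\ell|<k$. Indeed, since $\ell$ is odd, $|t_i|=1$ forces $t_i^\ell=\pm 1$, so $n+id^\ell=a_it_i^\ell=\pm a_i$ and $a_i=|n+id^\ell|$; hence the conditions $a_i<k$ and $|n+id^\ell|<k$ coincide. Conversely, if $|n+id^\ell|<k$ then every prime factor of $n+id^\ell$ is smaller than $k$, and by the factorization in Lemma \ref{lem:factorization of n plus i d ell} this forces $t_i=\pm 1$ and $a_i=|n+id^\ell|<k$.

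The proposition therefore reduces to bounding $|S|$, where $S:=\{i\in[0,k-1]:|n+id^\ell|<k\}$. Since $\{n+id^\ell\}_{i\in[0,k-1]}$ is an arithmetic progression with common difference $d^\ell\geq 1$ and $n+id^\ell$ is monotonic in $i$, the set $S$ is an interval of consecutive integers. Moreover, two terms of the AP lying in $(-k,k)$ have indices differing by at most $2(k-1)/d^\ell$, so $|S|\leq 2(k-1)/d^\ell+1$. This already yields $|S|\leq 20$ whenever $d^\ell\geq (k-1)/10$.

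The main obstacle is the ``small-$d$'' regime $d^\ell<(k-1)/10$, where the AP count alone is not enough. Here I would combine the $\ell$-th power identity $\prod_{i=0}^{k-1}(n+id^\ell)=t^\ell$ with Bertrand's postulate, which supplies a prime $p\in(k/2,k)$. If $p\nmid d$, then $p$ divides $n+id^\ell$ for at most one $i\in[0,k-1]$, because two such indices would differ by a positive multiple of $p>k/2$ and so cannot both lie in $[0,k-1]$. The $\ell$-th power condition then forces $v_p(n+i^\ast d^\ell)\equiv 0\pmod{\ell}$ for this unique index $i^\ast$, hence $p^\ell\mid n+i^\ast d^\ell$ and $|n+i^\ast d^\ell|\geq p^\ell>(k/2)^\ell>k$ for $\ell\geq 3$ and $k$ sufficiently large, placing $i^\ast\notin S$. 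Thus every prime $p\in(k/2,k)$ coprime to $d$ imposes a residue-class restriction on $S$ modulo $p$.

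To close the argument, I would combine these residue-class restrictions across the many primes $p\in(k/2,k)$. The constraint $d^\ell<(k-1)/10$ bounds $d$ by $(k/10)^{1/\ell}$, whereas the product of primes in $(k/2,k)$ grows far faster (on the order of $2^{k/\log k}$), so at least one prime $p\in(k/2,k)$ must be coprime to $d$; the accompanying structural restriction on $S$, together with the interval structure of $S$ and the hypothesis $|S|\geq 21$, should furnish the required contradiction. I expect the numerical threshold ``$20$'' to emerge from matching the AP count in Step~2 with the boundary of validity of this $\ell$-th power argument.
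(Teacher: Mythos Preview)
Your reformulation — that the condition is equivalent to $|n+id^\ell|<k$, that $S$ is an interval, and that $|S|\geq 21$ forces $d^\ell\leq k/10$ — is correct and matches the paper's opening. The Bertrand prime $p\in(k/2,k)$ is also the right tool. But two things go wrong in your small-$d$ endgame.

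First, the uniqueness claim is false: since $k/2<p<k$, the interval $[0,k-1]$ can contain \emph{two} indices in a fixed residue class modulo $p$ (e.g.\ $i$ and $i+p$ when $0\leq i<k-p$). With two indices $i_1,i_2$, the $\ell$-th power condition only gives $v_p(a_{i_1})+v_p(a_{i_2})\equiv 0\pmod{\ell}$; since this sum is positive and $\ell\geq 3$, one of the valuations is at least $2$, so $p^2\mid a_{i^\ast}$ for some $i^\ast$, but not necessarily $p^\ell\mid a_{i^\ast}$.

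Second, and more seriously, the sieve you propose cannot close the argument. Knowing $i^\ast(p)\notin S$ for various primes $p$ gives no control on $|S|$: the indices $i^\ast(p)$ are scattered across $[0,k-1]$ and need not interact with the interval $S$ at all. The paper does not combine primes; a single $p$ already suffices, once you extract the correct consequence. The step you are missing is to convert the local information $|n+j_0d^\ell|<k$ for \emph{one} $j_0\in S$ into a \emph{global} bound valid for \emph{every} $i\in[0,k-1]$: from $d^\ell\leq k/10$ one gets $|n|\leq |n+j_0d^\ell|+j_0d^\ell\leq k+k^2/10$, hence $|n+id^\ell|\leq k+k^2/5$ for all $i$. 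Then the index $i^\ast$ with $p^2\mid a_{i^\ast}$ yields
\[
\frac{k^2}{4}<p^2\leq a_{i^\ast}\leq |n+i^\ast d^\ell|\leq k+\frac{k^2}{5},
\]
which is impossible for $k\geq 21$. The contradiction lives at the single index $i^\ast$, not in a sieve on $S$.
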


The third proposition states that there are a positive proportion of distinct integers $a_i$ in the interval $[1,k-1]$, where the integers $a_i$ are as in Lemma \ref{lem:factorization of n plus i d ell}. This result has the most involved proof of the four key propositions. In particular, the proof of the proposition relies on the mass increment argument sketched in section \ref{sec:outline}.

\begin{proposition}[Positive density from mass increment]\label{prop:ai < k have pos density}
Assume that $k$ is sufficiently large and $\ell$ is a prime with $5\leq \ell \leq (\log \log k)^{1/5}$ and $\textup{gcd}(k,\ell)=1$. Assume there are nonzero integers $n$ and $t$ and a positive integer $d$ with $\textup{gcd}(n,d)=1$ such that \eqref{eq:prod over AP is ellth power} holds, and let the integers $a_i$ be as in Lemma \ref{lem:factorization of n plus i d ell}. Then $\#\{a_i : 1 \leq a_i < k\} \geq 0.23 k$.
\end{proposition}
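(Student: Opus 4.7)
My plan is to execute the mass increment argument outlined in Section \ref{sec:outline}. Write $S = \{a_i : 1 \leq a_i < k\}$ for the set of distinct small $a_i$ values and $M = |S|$; the goal is to prove $M \geq 0.23k$. I would first fix, via Lemma \ref{lem:subset divides factorial}, a subset $I \subseteq \{0, 1, \ldots, k-1\}$ of size $|I| = (1 - o(1))k$ satisfying $\prod_{i \in I} a_i \leq k!$. The divisibility $\gcd(a_i, a_j) \mid (j - i)$ from Lemma \ref{lem:factorization of n plus i d ell}, combined with $|j - i| < k$, forces the integers $a_i \geq k$ to be pairwise distinct, and Stirling applied to the product inequality already gives the preliminary bound $\#\{i \in I : a_i < k\} \gg k/\log k$, i.e.\ the content of Lemma \ref{lem:spark for mass inc, many i with small ai}.

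The heart of the argument is the increment step. I would assume for contradiction that $M \leq 0.23k$ and apply the sharper lower bound $\prod_{i \in I, a_i < k} a_i \geq M!$ (any $M$ distinct positive integers have product $\geq M!$). Combined with $\prod_{i \in I} a_i \leq k!$, Stirling then yields $(k + N - 1)!/(k - 1)! \leq k!/M!$, where $N = \#\{i \in I : a_i \geq k\}$; this controls the total mass $C^{\ast} := \#\{i \in I : a_i < k\} = |I| - N$ in terms of $M$. The assumed smallness of $M$ forces high aggregate multiplicities $m(a) = \#\{i : a_i = a\}$ on $S$. Each pair $i \neq j$ with $a_i = a_j = a$ satisfies $a(t_i^\ell - t_j^\ell) = (i - j)d^\ell$, placing the rational point $(t_i/d, t_j/d)$ on the curve $aX^\ell - aY^\ell = i - j$. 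A double pigeonhole on the common value $a$ and on the difference $\delta = i - j$ then extracts many distinct rational points on a single auxiliary curve of the form $AX^\ell + BY^\ell = C$. For $\ell \geq 5$ this curve has genus at least two, so I would invoke the quantitative Faltings theorem (Proposition \ref{prop:quant faltings theorem}) to bound its rational points above by a quantity that is too small to accommodate the pigeonhole lower bound, producing the contradiction. If the single step does not directly reach $0.23k$, I would iterate, feeding the improved $M$ bound back into the Stirling estimate at each pass.

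The hard part will be ensuring that the auxiliary curve $AX^\ell + BY^\ell = C$ has height only $(\log k)^{O(1)}$ rather than $k^{O(1)}$. A naive pigeonhole on $(a, \delta) \in [1, k) \times [-(k-1), k-1]$ spreads only $O(k)$ collision pairs across $\Theta(k^2)$ buckets and is too weak to contradict Faltings. The remedy must exploit additional structure---the $\ell$-th power condition $\prod a_i \in \mathbb{Z}^\ell$ from the final bullet of Lemma \ref{lem:factorization of n plus i d ell}, together with the concentration of mass forced by the product bound and a careful restriction to a sub-range of common values---to compress $|A|, |B|, |C|$ to $(\log k)^{O(1)}$ before invoking Faltings. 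The hypothesis $\ell \leq (\log \log k)^{1/5}$ enters through the dependence of the quantitative Faltings bound on $\ell$ and through the Stirling error terms, and it is chosen small enough to keep the combinatorial count of points on the auxiliary curve comfortably above the arithmetic upper bound.
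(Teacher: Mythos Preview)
Your overall architecture---fix $I$ via Lemma~\ref{lem:subset divides factorial}, use Stirling to convert a lower bound on the number of distinct small $a_i$ into a lower bound on the number of indices with $a_i<k$, detect collisions, feed them into Proposition~\ref{prop:quant faltings theorem}, and iterate---matches the paper. You also correctly locate the crux: compressing the auxiliary curve to height $(\log k)^{O(1)}$. But your proposed mechanism for that compression is wrong, and the actual one is both simpler and different.

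First, a missed simplification. When $a_i=a_j=\alpha$ you write the curve as $\alpha X^\ell-\alpha Y^\ell=i-j$. But Lemma~\ref{lem:factorization of n plus i d ell} gives $\gcd(\alpha,d)=1$, so $\alpha\mid(i-j)$ and the equation is already $X^\ell-Y^\ell=A_{i,j}$ with $A_{i,j}=(i-j)/\alpha$. The coefficients $A,B$ are thus $\pm1$ from the outset; only $|A_{i,j}|$ needs to be bounded. This same divisibility gives the bound $\#\{i:a_i=\alpha\}\le k/\alpha+1$, which is what drives the rest of the argument.

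Second, the compression of $|A_{i,j}|$ to $(\log k)^{O(1)}$ does \emph{not} use the $\ell$-th power condition $\prod a_i\in\mathbb{Z}^\ell$; that condition plays no role anywhere in the mass increment (it is used only in Proposition~\ref{prop:not many ti are equal to one}). The paper instead performs a dyadic decomposition on $\alpha\in(N/2,N]$. If $N>k/(\log k)^3$ then $|A_{i,j}|\le k/\alpha\ll(\log k)^3$ automatically. If $N\le k/(\log k)^3$, one covers $[0,k-1]$ by intervals of length $N(\log k)^3$; the multiplicity bound $\#T(\alpha)\ll k/N$ forces many intervals to contain at least two indices from $T(\alpha)$, and any such pair has $|i-j|\le N(\log k)^3$, whence $|A_{i,j}|\le 2(\log k)^3$. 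A final pigeonhole on the $O((\log k)^3)$ possible values of $A_{i,j}$ lands $\gg k/(\log k)^{O(1)}$ pairs on a single curve $X^\ell-Y^\ell=A_0$.

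Two smaller points. You need Proposition~\ref{prop:not many ti are equal to one} inside the increment step to discard the $\le 20$ indices with $|t_i|=1$; otherwise distinct pairs $(i,j)$ need not give distinct points $(t_i/d,t_j/d)$. And the iteration must be set up to produce an \emph{increasing lower bound} on $M$: assuming only $M\le 0.23k$ gives no collisions at the first step, since the initial index count $0.3k/\log k$ is far below $0.23k$. The paper iterates on a parameter $\delta_0$ with $M\ge\delta_0 k/\log k$, gaining a fixed increment $1/2000$ at each of $O(\log k)$ steps.
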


The last key proposition is a quantitative version of Faltings's theorem for certain curves; it is a special case of a theorem of R\'emond \cite{Rem2010}.

\begin{proposition}[Quantitative Faltings's theorem]\label{prop:quant faltings theorem}
Let $\ell \geq 5$ be a prime. Let $A,B,C$ be nonzero integers, and set $H = \max(|A|,|B|,|C|)$. Let $\mathcal{C}$ be the smooth projective curve with affine model $AX^\ell + BY^\ell = C$. Then the number $\#\mathcal{C}(\mathbb{Q})$ of $\mathbb{Q}$-rational points on $\mathcal{C}$ satisfies
\begin{align*}
\#\mathcal{C}(\mathbb{Q}) \leq \exp(5^{\ell^4}(\log 3H)(\log \log 3H)).
\end{align*}
\end{proposition}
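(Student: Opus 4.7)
The plan is to invoke Rémond's quantitative Faltings theorem from \cite{Rem2010} and specialize it to the twisted Fermat curve $\mathcal{C}: AX^\ell + BY^\ell = C$. First I would verify that the smooth projective model of $\mathcal{C}$ has genus $g = (\ell-1)(\ell-2)/2 \geq 6$ when $\ell \geq 5$, so that Faltings's theorem applies. Rémond's theorem then furnishes an explicit uniform bound of the shape
\begin{align*}
\#\mathcal{C}(\mathbb{Q}) \leq c_1(g)^{1+r},
\end{align*}
where $r := \textup{rank}_{\mathbb{Z}} J(\mathbb{Q})$ is the Mordell--Weil rank of the Jacobian $J$ of $\mathcal{C}$ and $c_1(g)$ is an explicit function of the genus of the form $\exp(g^{O(g)})$; in particular, since $g \ll \ell^2$, one has $\log c_1(g) \leq 5^{\ell^{O(1)}}$, comfortably less than $5^{\ell^4}$.

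Next I would bound the Mordell--Weil rank $r$ in terms of $H$. The curve $\mathcal{C}$ is a twist of the classical Fermat curve $X^\ell+Y^\ell=1$, and its Jacobian $J$ decomposes up to isogeny over the cyclotomic field $\mathbb{Q}(\zeta_\ell)$ into abelian subvarieties indexed by nontrivial characters of $(\mathbb{Z}/\ell\mathbb{Z})^{\times}$. A standard $\ell$-descent over $\mathbb{Q}(\zeta_\ell)$ bounds the rank of each piece via the size of an $\ell$-Selmer group, which is in turn controlled by the number of prime divisors of $\ell ABC$ together with the class group and unit group of $\mathbb{Q}(\zeta_\ell)$ (data that depend only on $\ell$). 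Combining this with the classical estimate $\omega(n) \ll \log n/\log\log n$ yields an inequality of the form
\begin{align*}
r \leq c_2(\ell) \cdot \frac{\log 3H}{\log \log 3H}.
\end{align*}

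Substituting this into Rémond's bound, taking logarithms, and absorbing every $\ell$-dependent constant into the very generous prefactor $5^{\ell^4}$ produces
\begin{align*}
\#\mathcal{C}(\mathbb{Q}) \leq c_1(g)^{1+r} \leq \exp\bigl(5^{\ell^4}(\log 3H)(\log\log 3H)\bigr),
\end{align*}
as required. The estimate is extremely lossy (the $\log\log$ factor is not needed), but it is more than adequate for the downstream application. The main obstacle in the argument is executing the descent step: one must carry it out uniformly in $A,B,C$, tracking the local conditions at primes of bad reduction and verifying that all constants arising from units and class numbers of $\mathbb{Q}(\zeta_\ell)$ can be absorbed into a function of $\ell$ alone. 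Once the rank bound is in place, Rémond's theorem is invoked as a black box and the numerical estimate follows immediately.
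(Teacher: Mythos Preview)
Your proposal is sound and would reach the stated bound, but it takes a longer route than the paper does. The paper's own proof is two sentences: observe that the curve has genus $\geq 2$ since $\ell \geq 5$ and $A,B,C$ are nonzero, and then invoke \cite[Th\'eor\`eme~1.1]{Rem2010} as a black box. The proposition is announced in the paper as a special case of that theorem, so the $H$-dependence is already built into R\'emond's statement and no separate rank estimate is required.

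Your approach instead uses the rank-dependent form of R\'emond's bound, $\#\mathcal{C}(\mathbb{Q}) \leq c_1(g)^{1+r}$, and then bounds the Mordell--Weil rank $r$ of the Jacobian by descent. This is a legitimate alternative and has the virtue of making explicit where the $H$-dependence enters (through the primes of bad reduction of the Jacobian). However, the descent you sketch --- decomposing the Jacobian over $\mathbb{Q}(\zeta_\ell)$ and performing $\ell$-descent on each CM factor --- is heavier than necessary for the target inequality. A cruder estimate of the shape $r \ll g\,(1 + \omega(\ell ABC))$, obtained by bounding a $2$-Selmer group in terms of the primes of bad reduction (as in Ooe--Top), already yields $r \ll \ell^2 \log(3H)/\log\log(3H)$, and this is more than enough once absorbed into $5^{\ell^4}$. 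So the obstacle you flag (carrying out the $\ell$-descent uniformly in $A,B,C$ and controlling the cyclotomic class group and units) is genuine for the route you chose, but it can be sidestepped either by this simpler Selmer bound or, as the paper does, by citing the version of R\'emond's theorem that already packages the height dependence.
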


\begin{proof}[Proof of Theorem \ref{thm:main theorem} assuming Propositions \ref{prop:dense sequence has large gcds}, \ref{prop:not many ti are equal to one}, \ref{prop:ai < k have pos density}, and \ref{prop:quant faltings theorem}]
Let $k$ be large, and assume $\ell$ is a prime satisfying $5\leq \ell \leq (\log \log k)^{1/5}$ with $\text{gcd}(k,\ell)=1$. Assume for contradiction that \eqref{eq:ES curve} contains a rational point $(x,y)$ with $y\neq 0$. Lemmas \ref{lem:rat point on ES implies Dioph eq with ints} and \ref{lem:factorization of n plus i d ell} then provide us with factorizations $n+id^\ell = a_it_i^\ell$ for $0\leq i \leq k-1$. By Proposition \ref{prop:ai < k have pos density} we have $\#\{a_i : 1 \leq a_i < k\} \geq 0.23 k$. According to Proposition \ref{prop:not many ti are equal to one} there are at most 20 integers $i \in [0,k-1]$ such that $a_i < k$ and $|t_i|=1$, and therefore
\begin{align*}
\#\{a_i : 1 \leq a_i < k, |t_i| \neq 1\} \geq 0.23 k - 20 \geq 0.229 k.
\end{align*}

We apply Proposition \ref{prop:dense sequence has large gcds} with $c=0.229, A=283, \eta = 1/17000$, noting that
\begin{align*}
\frac{284}{17000}+\prod_{p\leq 283}\left(1 - \frac{1}{p}\right) < 0.114499 < \frac{c}{2} = 0.1145.
\end{align*}
This yields $\gg k$ distinct ordered pairs $(a_i,a_j)$ of distinct integers $a_i,a_j<k$ such that $\text{gcd}(a_i,a_j) > k/17000$. For each such pair we have $a_it_i^\ell - a_jt_j^\ell = (i-j)d^\ell$, and factoring out the greatest common divisor of $a_i$ and $a_j$ gives $f_it_i^\ell - g_jt_j^\ell = \pm h_{i,j}d^\ell$, where $1\leq f_i,g_j,h_{i,j}\leq 17000$ are positive integers and the $\pm$ sign is taken according to whether $i-j$ is positive or negative. Now observe that the point $(t_i/d,t_j/d)$ lies on the curve $f_i X^\ell - g_j Y^\ell = \pm h_{i,j}$. Since $|t_i|,|t_j|\neq 1$ and the $t_i$'s are pairwise coprime, all of the rational points $(t_{i}/d,t_{j}/d)$ are distinct as we range over the $\gg k$ pairs $(a_i,a_j)$ provided by Proposition \ref{prop:dense sequence has large gcds}. Therefore, by the pigeonhole principle, there exist nonzero integers $|A|,|B|,|C| \leq 17000$ such that the curve with affine model $AX^\ell + BY^\ell = C$ contains $\gg k$ rational points. This contradicts Proposition \ref{prop:quant faltings theorem} since $5^{\ell^4} \leq \exp((\log 5)(\log \log k)^{4/5}) \leq \sqrt{\log k}$, say.
\end{proof}

\section{Proofs of Propositions \ref{prop:dense sequence has large gcds}, \ref{prop:not many ti are equal to one}, and \ref{prop:quant faltings theorem}}\label{sec:proof of some of key props}

In this section we give proofs for three of the four key propositions.

\begin{proof}[Proof of Proposition \ref{prop:dense sequence has large gcds}]
The proof is essentially that of \cite[Lemma 1]{Erd1939}, but we supply more details and proceed in a more quantitative fashion.

Let $\eta k < d_1 < \cdots < d_s \leq k$ be all those integers in $(\eta k , k]$ such that every proper divisor of $d_i$ is $\leq \eta k$ (i.e. every divisor $d$ of $d_i$ with $d < d_i$ satisfies $d\leq \eta k$). Write $\mathcal{D}$ for this set of $d_i$, and observe that $\#\mathcal{D} = s$.

We claim that every integer $n \in (\eta k ,k]$ is divisible by some element of $\mathcal{D}$. If $n \in \mathcal{D}$ then this is obvious, so assume $n \not \in \mathcal{D}$. Then $n$ has a proper divisor $d > \eta k$, and without loss of generality we may assume $d$ is the least such divisor of $n$. By the minimality of $d$ every proper divisor of $d$ is $\leq \eta k$, so $d \in \mathcal{D}$. This proves the claim.

There are $\geq r - \eta k$ integers $b_i$ such that $b_i \in (\eta k , k]$. By the claim, every such $b_i$ is divisible by some $d \in \mathcal{D}$. We may then define a function $f : \{b_i \in (\eta k ,k]\} \rightarrow \mathcal{D}$ given by $f(b_i) = d$, say, where $d$ is the largest element of $\mathcal{D}$ which divides $b_j$. There are at most $s$ integers $b_i$ such that $f(b_i)$ is unique, so there are $\geq r - \eta k - s$ integers $b_i$ such that $f(b_i) = f(b_j)$ for some $j \neq i$. For each such $b_i$ we arbitrarily choose some $b_j \neq b_i$ such that $f(b_j)=f(b_i)$, and this gives rise to a pair $(b_,b_j)$ such that $\text{gcd}(b_i,b_j) > \eta k$. The pairs $(b_i,b_j)$ and $(b_i',b_j')$ are distinct if $b_i \neq b_i'$, so there are $\geq r-\eta k -s$ distinct (ordered) pairs of distinct integers $b_i \neq b_j$ such that $\text{gcd}(b_i,b_j) > \eta k$.

Since $r\geq ck$ it remains to show $\eta k +s \leq \frac{2c}{3}k$. Recall $s = \#\mathcal{D}$, and set $\delta = A\eta$. Then clearly $s=\#\mathcal{D} \leq \delta k + \#\{d \in \mathcal{D} : d \in (\delta k,k]\}$. Let $p$ be the least prime factor of some $d \in \mathcal{D}$ with $d > \delta k$. Then $d/p$ is a proper divisor of $d$ so by definition $d/p \leq \eta k$, and therefore $p>A$. If $k$ is sufficiently large, then the number of integers $\leq k$ all of whose prime factors are $> A$ is
\begin{align*}
k \prod_{p \leq A}\left(1 - \frac{1}{p}\right) + O_A(1)
\end{align*}
by inclusion-exclusion, and therefore 
\begin{align*}
s &\leq A\eta k + k \prod_{p \leq A}\left(1 - \frac{1}{p}\right) + O_A(1).
\end{align*}
Since $k$ is sufficiently large, the hypothesis of the proposition implies
\begin{align*}
\eta k + s &\leq \eta k + A\eta k + k \prod_{p \leq A}\left(1 - \frac{1}{p}\right) + O_A(1) \\
&= k \left(\eta(A+1) + \prod_{p \leq A}\left(1 - \frac{1}{p}\right) + O_A(k^{-1}) \right) \leq k \left(\frac{c}{2} + O_A(k^{-1}) \right) \leq \frac{2c}{3} k. \qedhere
\end{align*}
\end{proof}

%
\begin{proof}[Proof of Proposition \ref{prop:not many ti are equal to one}]
We recall from Lemma \ref{lem:factorization of n plus i d ell} that $n+id^\ell = a_i t_i^\ell$, where $a_i$ is a positive integer divisible only by primes $<k$ and $t_i$ is divisible only by primes $\geq k$. Furthermore, if $0\leq i < j \leq k-1$, then $\text{gcd}(a_i,a_j) \mid (j-i)$ and $\text{gcd}(t_i,t_j)=1$.

Assume for contradiction that there are $\geq 21$ integers $i \in [0,k-1]$ with $a_i < k$ and $|t_i|=1$. We may then find some such $i_0,j_0$ with $|i_0-j_0|\geq 20$, $1\leq a_{i_0},a_{j_0} < k$, and $|t_{i_0}|=|t_{j_0}|=1$. Therefore
\begin{align*}
20d^\ell \leq |(i_0-j_0)d^\ell| = |n+i_0d^\ell - (n+j_0d^\ell)|\leq |n+i_0d^\ell| + |n+j_0d^\ell| < 2k,
\end{align*}
so $d^\ell \leq k/10$. By the triangle inequality we have
\begin{align*}
|n| = |n+j_0d^\ell -j_0d^\ell| \leq k + kd^\ell \leq k + k^2/10,
\end{align*}
so for any $i \in [0,k-1]$ we have the inequality
\begin{align*}
|n+id^\ell| \leq |n| + kd^\ell \leq k+k^2/5.
\end{align*}

We note that $d\leq (k/10)^{1/\ell} <k/2$, so if $p>k/2$ is a prime we have $p\nmid d$. By Bertrand's postulate we may fix some prime $p \in (k/2,k)$. The integer $n+id^\ell$ is divisible by $p$ if and only if $i\equiv - n\overline{d^\ell} \pmod{p}$, and the number of $i\in[0,k-1]$ such that this congruence holds is either one or two. Observe that $p$ divides $n+id^\ell$ if and only if $p \mid a_i$. Since the product of the $a_i$ is a perfect $\ell$-th power (Lemma \ref{lem:factorization of n plus i d ell}), there is some $a_i$ with $p^2 \mid a_i$. Then
\begin{align*}
k^2/4 < p^2 \leq a_i \leq |n+id^\ell| \leq k + k^2/5,
\end{align*}
and this is a contradiction for $k\geq 21$.
\end{proof}

%
%

\begin{proof}[Proof of Proposition \ref{prop:quant faltings theorem}]
Since $\ell\geq 5$ and the integers $A,B,C$ are nonzero, the curve $\mathcal{C}$ has genus $\geq 2$. Therefore, by Faltings's theorem \cite{Fal1983}, $\mathcal{C}$ has only finitely many rational points. The result then follows from \cite[Th\'eor\`eme 1.1]{Rem2010}.
\end{proof}

\section{The mass increment argument}\label{sec:mass increment}

In this section we prove Proposition \ref{prop:ai < k have pos density}. As described above, the proof relies on an argument that we have termed the \emph{mass increment argument}. The factorization $n+id^\ell = a_it_i^\ell$, described in Lemma \ref{lem:factorization of n plus i d ell}, is crucial. We assume throughout this section that there are nonzero integers $n$ and $t$ and a positive integer $d$ with $\textup{gcd}(n,d)=1$ such that \eqref{eq:prod over AP is ellth power} holds. We also assume familiarity with the properties of the factorizations $n+id^\ell = a_it_i^\ell$ detailed in Lemma \ref{lem:factorization of n plus i d ell}.

The first lemma is essentially due to Erd\H{o}s \cite[Lemma 3]{Erd1955}.

\begin{lemma}\label{lem:subset divides factorial}
Let $k\geq 2$ be an integer. Let $b_0,b_1,\ldots,b_{k-1}$ be positive integers divisible only by primes $<k$ such that whenever $0\leq i < j \leq k-1$ we have $\textup{gcd}(b_i,b_j) \mid (j-i)$. Then there is a subset $S \subseteq \{0,1,\ldots,k-1\}$ with $|S| \geq k-\pi(k)$ such that
\begin{align*}
\prod_{i \in S} b_i \mid (k-1)!.
\end{align*}
\end{lemma}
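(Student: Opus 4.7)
The plan is to construct $S$ by removing at most one index for each prime $p<k$, namely an index where $v_p(b_i)$ is maximal. The factorial bound will then follow by a prime-by-prime $p$-adic valuation check, exploiting the gcd condition in the form of a clean residue-class restriction.

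First I would translate the hypothesis into a statement about prime-power divisors: for any prime $p$ and any integer $e\geq 1$, whenever $p^e\mid b_i$ and $p^e\mid b_j$, the gcd condition gives $p^e\mid \gcd(b_i,b_j)\mid(j-i)$, so $i\equiv j\pmod{p^e}$. Hence the set $T_p^e=\{0\leq i\leq k-1: p^e\mid b_i\}$ lies in a single residue class modulo $p^e$, and in particular
\[
|T_p^e|\leq \lceil k/p^e\rceil = \lfloor (k-1)/p^e\rfloor + 1,
\]
where the last identity holds for every positive integer $p^e$.

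Next I would make the construction. For each prime $p<k$, let $M_p=\max_{0\leq i\leq k-1} v_p(b_i)$ and let $i_p$ be any index attaining this maximum (choosing $i_p$ arbitrarily when $M_p=0$). Define $S=\{0,1,\dots,k-1\}\setminus\{i_p:p<k\text{ prime}\}$. Since we remove at most one index per prime below $k$, we immediately get $|S|\geq k-\pi(k)$. The remaining task is to verify that $v_p\bigl(\prod_{i\in S}b_i\bigr)\leq v_p((k-1)!)$ for every prime $p<k$; primes $p\geq k$ are irrelevant because all $b_i$ have prime factors only below $k$.

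For the key bound, fix a prime $p<k$. Since $|T_p^e|>0$ only for $e\leq M_p$, the first step gives
\[
v_p\!\Bigl(\prod_{i=0}^{k-1}b_i\Bigr) \;=\; \sum_{e=1}^{M_p}|T_p^e| \;\leq\; \sum_{e=1}^{M_p}\bigl(\lfloor (k-1)/p^e\rfloor + 1\bigr) \;\leq\; v_p((k-1)!)+M_p.
\]
The index $i_p$ lies in the removed set and contributes $v_p(b_{i_p})=M_p$ to the total $p$-adic valuation, so
\[
v_p\!\Bigl(\prod_{i\in S}b_i\Bigr) \;=\; v_p\!\Bigl(\prod_{i=0}^{k-1}b_i\Bigr) - \sum_{i\in R}v_p(b_i) \;\leq\; v_p\!\Bigl(\prod_{i=0}^{k-1}b_i\Bigr) - M_p \;\leq\; v_p((k-1)!),
\]
where $R=\{i_p:p<k\}$ and we used nonnegativity of the other $v_p(b_{i_q})$ terms. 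Taking the product over all primes $p<k$ yields $\prod_{i\in S}b_i\mid (k-1)!$ and completes the proof.

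The argument is essentially a careful bookkeeping exercise; the only place where anything could go wrong is the double-counting when different primes share a maximizer, and that issue only helps (we subtract at least $M_p$, possibly more). The genuine content is the single-residue-class observation, which is the direct consequence of the gcd hypothesis and is what powers the bound $|T_p^e|\leq\lceil k/p^e\rceil$.
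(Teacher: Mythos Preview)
Your proof is correct and uses the same construction as the paper: remove one index $i_p$ maximizing $v_p(b_{i_p})$ for each prime $p<k$. The verification of the valuation bound differs, however. You count the level sets $T_p^e=\{i:p^e\mid b_i\}$, use the gcd hypothesis to confine each to a single residue class mod $p^e$, and compare $\sum_e|T_p^e|$ directly to Legendre's formula for $v_p((k-1)!)$. The paper instead argues that for $i\neq i_p$ one has $v_p(b_i)=v_p(\gcd(b_i,b_{i_p}))\leq v_p(|i-i_p|)$, whence $v_p\bigl(\prod_{i\neq i_p}b_i\bigr)\leq v_p\bigl(i_p!\,(k-1-i_p)!\bigr)\leq v_p((k-1)!)$, the last step because $\binom{k-1}{i_p}$ is an integer. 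The binomial-coefficient trick is a bit slicker and avoids the explicit $\lceil k/p^e\rceil=\lfloor(k-1)/p^e\rfloor+1$ bookkeeping, but your level-set count is equally valid and arguably more transparent about where the $+1$ (and hence the need to remove $i_p$) comes from.
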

\begin{proof}
For each prime $p< k$, let $i_p \in [0,k-1]$ be such that $v_p(b_{i_p})$ is maximal. We claim that the set $S = \{0,\ldots,k-1\} \backslash \{i_p : p < k\}$ has the desired property. We clearly have $|S|\geq k-\pi(k)$ by the union bound. Since the $b_i$ are only divisible by primes $<k$, it suffices to show that
\begin{align*}
v_p \Bigg(\prod_{i \in S} b_i \Bigg) \leq v_p((k-1)!)
\end{align*}
for each prime $p<k$.

Fix a prime $p<k$. By the maximality of $v_p(b_{i_p})$ and the hypothesis on the greatest common divisor of pairs,
\begin{align*}
v_p \Bigg(\prod_{i \in S} b_i \Bigg) \leq v_p \Bigg(\prod_{\substack{0\leq i \leq k-1 \\ i \neq i_p}}b_i \Bigg) = v_p \Bigg(\prod_{\substack{0\leq i \leq k-1 \\ i \neq i_p}}\text{gcd}(b_i,b_{i_p}) \Bigg) \leq v_p \Bigg(\prod_{\substack{0\leq i \leq k-1 \\ i \neq i_p}}|i-i_p| \Bigg).
\end{align*}
Splitting into $i\leq i_p-1$ and $i\geq i_p+1$ yields
\begin{align*}
\prod_{\substack{0\leq i \leq k-1 \\ i \neq i_p}}|i-i_p| =i_p! (k-1-i_p)!.
\end{align*}
Since $i_p! (k-1-i_p)!$ is the denominator of the binomial coefficient
\begin{align*}
{{k-1}\choose i_p} = \frac{(k-1)!}{i_p! (k-1-i_p)!}
\end{align*}
and ${{k-1}\choose i_p}$ is a positive integer, it follows that
\begin{align*}
v_p \Bigg(\prod_{\substack{0\leq i \leq k-1 \\ i \neq i_p}}|i-i_p| \Bigg) &\leq v_p((k-1)!). \qedhere
\end{align*}
\end{proof}

\begin{lemma}\label{lem:ai's bigger than k are distinct}
If $n+id^\ell = a_it_i^\ell$ and $a_i \geq k$, then $a_i$ is unique. That is, if $a_i\geq k$, then $a_i \neq a_j$ for all $j \neq i$.
\end{lemma}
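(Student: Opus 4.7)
The plan is to derive a contradiction directly from the divisibility property $\gcd(a_i, a_j) \mid (j-i)$ stated in Lemma \ref{lem:factorization of n plus i d ell}. Specifically, I would suppose for contradiction that there exist indices $i \neq j$ (without loss of generality $i < j$) with $a_i = a_j$ and $a_i \geq k$. Then $\gcd(a_i, a_j) = a_i$, so by Lemma \ref{lem:factorization of n plus i d ell} we have $a_i \mid (j - i)$.

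However, $0 < j - i \leq k - 1$, while by assumption $a_i \geq k > j - i$. A positive integer dividing a strictly smaller positive integer is impossible, so this yields the desired contradiction. Hence no such $j$ exists, and $a_i$ is unique whenever $a_i \geq k$.

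The proof is essentially a one-line application of the gcd bound from Lemma \ref{lem:factorization of n plus i d ell}, so there is no real obstacle — the only thing to be careful about is noting that $i \neq j$ implies $j - i \neq 0$, so the divisibility $a_i \mid (j-i)$ genuinely forces the size comparison $a_i \leq |j - i| \leq k - 1$, contradicting $a_i \geq k$.
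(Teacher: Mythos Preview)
Your proof is correct and essentially the same as the paper's. The only minor difference is that you cite the divisibility $\gcd(a_i,a_j)\mid(j-i)$ directly from Lemma~\ref{lem:factorization of n plus i d ell}, whereas the paper re-derives this on the spot from the identity $\alpha(t_j^\ell - t_i^\ell) = (j-i)d^\ell$ together with $\gcd(\alpha,d)=1$; the conclusion $a_i \leq k-1$ is reached identically.
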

\begin{proof}
We proceed by contrapositive. Let $0\leq i < j \leq k-1$ be such that $n+id^\ell = a_it_i^\ell$ and $n+jd^\ell = a_jt_j^\ell$ with $a_i = a_j = \alpha\geq 1$. Then $\alpha(t_j^\ell - t_i^\ell) = (j-i) d^\ell$, so $\alpha \mid (j-i)d^\ell$. By Lemma \ref{lem:factorization of n plus i d ell} we see $\text{gcd}(\alpha,d)=1$ and therefore $\alpha \mid (j-i)$. Since $1\leq j-i \leq k-1$ we have $\alpha \leq k-1$.
\end{proof}

We introduce a bit of notation. Note that, by Lemma \ref{lem:subset divides factorial}, there exists $I \subseteq \{0,1,\ldots,k-1\}$ with $|I|\geq k - \pi(k)$ such that
\begin{align}\label{eq:special subset S dividing factorial}
\prod_{i \in I} a_i \leq k!.
\end{align}
Whenever we write $I$ in this section we shall mean this particular set $I$.

The next lemma is the beginning of the mass increment argument, and provides for the existence of many integers $i$ such that $a_i <k$.

\begin{lemma}\label{lem:spark for mass inc, many i with small ai}
If $k$ is sufficiently large, then there are $\geq 0.3 k/\log k$ integers $i \in I$ such that $a_i < k$.
\end{lemma}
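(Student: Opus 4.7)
The plan is to contrast the upper bound $\prod_{i \in I} a_i \leq k!$ from \eqref{eq:special subset S dividing factorial} with a lower bound on the same product coming from the distinctness result of Lemma \ref{lem:ai's bigger than k are distinct}. Setting $m = \#\{i \in I : a_i \geq k\}$, those $m$ values of $a_i$ are pairwise distinct positive integers, all at least $k$, so their product is bounded below by $k(k+1)\cdots(k+m-1) = (k+m-1)!/(k-1)!$. Combined with \eqref{eq:special subset S dividing factorial}, this forces
\begin{align*}
(k+m-1)! \leq k! \, (k-1)!.
\end{align*}

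Next I would set $s := k-m$ and extract a lower bound on $s$. We may assume $s \leq 2k/\log k$ since otherwise $\#\{i \in I : a_i < k\} \geq |I| - m \geq s - \pi(k) > 0.3\,k/\log k$ already, using $|I| \geq k - \pi(k)$. Taking logs of the factorial inequality and applying Stirling's formula in the form $\log n! = n\log n - n + O(\log n)$, together with the expansion $\log(2k-s-1) = \log(2k) - (s+1)/(2k) + O((s/k)^2)$, the display above simplifies after cancellation to
\begin{align*}
2k\log 2 \leq (s+1)\log k + O(k/(\log k)^2) + O(\log k),
\end{align*}
from which $s \geq (2\log 2 - o(1))\,k/\log k$.

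To finish, invoke the prime number theorem (or a weaker Chebyshev-type bound) to write $\pi(k) \leq (1+o(1))\,k/\log k$. Then
\begin{align*}
\#\{i \in I : a_i < k\} \geq (k - \pi(k)) - m = s - \pi(k) \geq \bigl(2\log 2 - 1 - o(1)\bigr)\frac{k}{\log k}.
\end{align*}
Since $2\log 2 - 1 = 0.386\ldots$, the bound $0.3\,k/\log k$ follows for all sufficiently large $k$. I do not expect any genuine obstacle here; the sole delicacy is the Stirling bookkeeping in the middle step, and the roughly $0.08$ of slack in the final constant comfortably absorbs the error terms and the $\pi(k)$ loss.
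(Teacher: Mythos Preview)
Your proposal is correct and follows essentially the same route as the paper: both arguments combine the distinctness of the $a_i \geq k$ (Lemma~\ref{lem:ai's bigger than k are distinct}) with the factorial bound \eqref{eq:special subset S dividing factorial}, apply Stirling, and extract the constant $2\log 2$ before subtracting off $\pi(k)$. The paper parametrizes via $R = (1-\eta/\log k)k$ and shows $\eta \geq 1.31$, which is exactly your $s = k-m \geq (2\log 2 - o(1))k/\log k$ in different notation; your intermediate error term $O(k/(\log k)^2)$ silently absorbs an $(s+1)\log 2 = O(k/\log k)$ contribution, but this is harmless for the final inequality.
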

\begin{proof}
Let $R$ denote the number of integers $i \in I$ such that $a_i \geq k$. By Lemma \ref{lem:ai's bigger than k are distinct} all these integers $a_i$ are distinct, so \eqref{eq:special subset S dividing factorial} yields
\begin{align*}
\prod_{j=0}^{R-1} (k+j) \leq \prod_{\substack{i \in I \\ a_i \geq k}} a_i \leq k!.
\end{align*}
We apply Stirling's formula and take logarithms to deduce
\begin{align*}
\sum_{j=0}^{R-1} \log(k+j) \leq k\log k - k + O(\log k).
\end{align*}
The left-hand side is $\geq R \log k$, so $R < k$. By Euler-Maclaurin or Stirling's formula,
\begin{align*}
\sum_{j=0}^{R-1} \log(k+j)&= (R+k)\log(R+k) - k\log k - R + O(\log k),
\end{align*}
and therefore
\begin{align}\label{eq:upper bound on R = num of large ai}
(R+k)\log(R+k) - k\log k - R \leq k \log k - k + O(\log k).
\end{align}
Since $R < k$ we may write $R = (1 - \frac{\eta}{\log k}) k$ for some $\eta > 0$. A rearrangement of \eqref{eq:upper bound on R = num of large ai} and some cancellations give
\begin{align}\label{eq:initial small ai 2 log 2 inequality}
k\Big(2 - \frac{\eta}{\log k} \Big) \log\Big(2 - \frac{\eta}{\log k} \Big) \leq \eta k + O(\log k).
\end{align}
Since $2\log 2 = 1.38\ldots$, we have a contradiction in \eqref{eq:initial small ai 2 log 2 inequality} if $\eta \leq 1.31$, so we must have $\eta \geq 1.31$. Since $R = (1- \frac{\eta}{\log k}) k$ this implies $R \leq (1 - \frac{1.31}{\log k}) k$. Now observe that
\begin{align*}
k - \pi(k) &\leq |I| = \#\{i \in I : a_i < k\} + \#\{i \in I : a_i \geq k\} = \#\{i \in I : a_i < k\} + R \\
&\leq \#\{i \in I : a_i < k\} + \Big(1 - \frac{1.31}{\log k}\Big) k,
\end{align*}
and therefore
\begin{align*}
\#\{i \in I : a_i < k\} \geq 1.31\frac{k}{\log k} - \pi(k) \geq 0.3 \frac{k}{\log k},
\end{align*}
by the prime number theorem and the fact that $k$ is sufficiently large.
\end{proof}

Lemma \ref{lem:spark for mass inc, many i with small ai} is a good start, but it is not immediately suitable. It says there are many elements of $I$ with $a_i < k$, but we do not know how these integers $a_i < k$ are distributed. In particular, we do not know whether there are many distinct $a_i$ or whether there are many $i$ with $a_i$ equal to one another. It is \emph{a priori} possible, for example, that for every such $i$ we have $a_i = \alpha$ for some fixed integer $1\leq \alpha \leq k-1$. Setting aside this problem for the moment, the next lemma shows that if there are many \emph{distinct} integers $a_i < k$, then we obtain an increase in the number of $i \in I$ with $a_i < k$.

\begin{lemma}\label{lem:many distinct ai implies more indices i}
Let $k$ be sufficiently large and $1/100\leq \delta_0 \leq 0.23\log k$. If there are $\geq \delta_0 k/\log k$ distinct integers $a_i <k$ with $i \in I$, then there are $\geq (\delta_0+1/1000)k/\log k$ elements $i \in I$ such that $a_i <k$.
\end{lemma}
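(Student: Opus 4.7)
The plan is to argue by contradiction. Suppose that $N := \#\{i \in I : a_i < k\}$ satisfies $N < (\delta_0 + 1/1000)k/\log k$, and set $R := |I| - N$. From $|I| \geq k - \pi(k)$ and the prime number theorem, writing $R = (1 - \eta/\log k) k$, this forces $\eta \leq 1 + \delta_0 + 1/1000 + o(1)$. The contradiction will come from showing that the product bound $\prod_{i \in I} a_i \leq (k-1)!$ from Lemma~\ref{lem:subset divides factorial} forces $\eta$ to be strictly larger than this.

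I will combine two lower bounds on factors of the product. First, the set $\{a_i : i \in I, a_i < k\}$ contains at least $D \geq \delta_0 k/\log k$ distinct positive integers, and the minimum of a product of $D$ distinct positive integers is $D!$; hence $\prod_{i \in I, a_i < k} a_i \geq D!$. Second, by Lemma~\ref{lem:ai's bigger than k are distinct} the $R$ integers $a_i \geq k$ with $i \in I$ are pairwise distinct, so $\prod_{i \in I, a_i \geq k} a_i \geq k(k+1)\cdots(k+R-1)$. Multiplying, $D! \cdot k(k+1)\cdots(k+R-1) \leq (k-1)!$.

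Taking logarithms and invoking Stirling's formula transforms this, up to $O(\log k)$ errors, into
\[
(k+R)\log(k+R) + D\log D - D - R \leq 2k\log k - k.
\]
Substituting $D = \delta_0 k/\log k$ and $R = (1 - \eta/\log k) k$, and expanding, yields a lower bound of the shape $\eta \geq \delta_0 + \kappa - o(1)$, where the quantity $\kappa$ is positive; in the regime of bounded $\delta_0$, this lower bound becomes the clean $\eta \geq \delta_0 + 2\log 2 - o(1)$, because a Taylor expansion of $(2-\eta/\log k)\log(2-\eta/\log k)$ around $\eta/\log k = 0$ produces the factor $2\log 2 \approx 1.386$. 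Combined with the contradiction hypothesis $\eta \leq 1 + \delta_0 + 1/1000 + o(1)$, this collapses to an inequality of the form $\kappa \leq 1 + 1/1000 + o(1)$, which is false for sufficiently large $k$ since $\kappa$ exceeds $1 + 1/1000 = 1.001$ by a fixed positive margin.

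The principal obstacle is carrying this Stirling analysis out uniformly across the full range $\delta_0 \in [1/100, 0.23\log k]$. For bounded $\delta_0$ the Taylor argument is transparent, but when $\delta_0$ is as large as $\Theta(\log k)$, the parameter $\eta/\log k$ tends to a nontrivial constant $\gamma := \delta_0/\log k$ rather than to zero, and one must instead analyze $(2-\gamma)\log(2-\gamma) + \gamma\log\gamma$ directly and isolate $\eta - \delta_0$ by hand; the numerical constants $0.23\log k$ in the hypothesis and $1/1000$ in the conclusion are chosen so that the resulting inequality persists throughout the range with enough slack to absorb the $o(1)$ errors coming from Stirling and from the prime number theorem bound on $|I|$.
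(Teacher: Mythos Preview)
Your proposal is correct and follows essentially the same route as the paper: both combine the factorial lower bound $D!$ on $\prod_{a_i<k}a_i$ with the lower bound $\prod_{j=0}^{R-1}(k+j)$ on $\prod_{a_i\geq k}a_i$, apply Stirling, substitute $R=(1-\eta/\log k)k$, and extract a lower bound on $\eta-\delta_0$ that beats $1+1/1000$. The paper dispatches your ``principal obstacle'' of uniformity in a single stroke rather than by a separate large-$\delta_0$ analysis: under the assumption $\eta\leq\delta_0+1.005$ one has $\eta/\log k\leq 0.23+o(1)$, so $(2-\eta/\log k)\log(2-\eta/\log k)\geq 1.77\log 1.77\approx 1.011>1.005$ uniformly, and the auxiliary term $\gamma\log\gamma$ you mention is simply absorbed into the $O(\delta_0 k\log\log k/\log k)$ error.
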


\begin{proof}
The proof is similar to that of Lemma \ref{lem:spark for mass inc, many i with small ai}, but we utilize the additional information that we have many distinct integers $a_i < k$ in order to get a smaller upper bound on the number of $i \in I$ with $a_i \geq k$.

Let $R$ denote the number of $i \in I$ with $a_i \geq k$. Since there are $\geq \delta_0 \frac{k}{\log k}$ distinct integers $a_i \in I$ with $a_i < k$, we see by \eqref{eq:special subset S dividing factorial} and Lemma \ref{lem:ai's bigger than k are distinct} that
\begin{align}\label{eq:prod of small ai and big ai divides factorial}
k! \geq \prod_{\substack{i \in I \\ a_i < k}} a_i \cdot \prod_{\substack{i \in I \\ a_i \geq k}} a_i \geq  \Big\lfloor \delta_0 \frac{k}{\log k}\Big\rfloor ! \cdot \prod_{j=0}^{R-1} (k+j).
\end{align}
Stirling's formula implies $\log (\lfloor \delta_0 k/\log k\rfloor !) = \delta_0 k + O \left(\delta_0 k \log \log k/\log k \right)$, so by \eqref{eq:prod of small ai and big ai divides factorial} and another application of Stirling's formula
\begin{align*}
\sum_{i=0}^{R-1} \log(k+j) &\leq k\log k - (1+\delta_0) k + O \left(\delta_0 k \frac{\log \log k}{\log k} \right).
\end{align*}
By the argument leading to \eqref{eq:upper bound on R = num of large ai} in the proof of Lemma \ref{lem:spark for mass inc, many i with small ai}, we deduce
\begin{align}\label{eq:ineq for R with distinctness refinement}
(R+k)\log(R+k) - R \leq 2k\log k - (1+\delta_0)k + O\left(\delta_0 k \frac{\log \log k}{\log k} \right).
\end{align}
Inequality \eqref{eq:ineq for R with distinctness refinement} implies $R < k$, so we write $R = (1 - \frac{\eta}{\log k}) k$ with $\eta > 0$. We insert this expression for $R$ into \eqref{eq:ineq for R with distinctness refinement}, then rearrange and cancel to obtain
\begin{align}\label{eq:distinct ai delta0 ineq}
k\left(2 - \frac{\eta}{\log k} \right) \log \left(2 - \frac{\eta}{\log k} \right) + \delta_0 k  \leq \eta k + O \left(\delta_0 k \frac{\log \log k}{\log k} \right).
\end{align}
Assume that $\eta \leq \delta_0 + 1.005$. Since $\delta_0 \leq 0.23 \log k$, the left-hand side of \eqref{eq:distinct ai delta0 ineq} is $\geq (1-o(1)) k (\delta_0 + 1.77\log(1.77)) \geq k(\delta_0 + 1.01)$. This gives a contradiction, since the right-hand side is $\leq (1+o(1))k (\delta_0 + 1.005)$. Thus, we must have $\eta \geq \delta_0 + 1.005$. It follows that
\begin{align*}
k-\pi(k)\leq |I| &= \#\{i \in I : a_i < k\} + R \leq \#\{i \in I : a_i < k\} + \left(1 - \frac{\delta_0 + 1.005}{\log k}\right) k,
\end{align*}
so
\begin{align*}
\#\{i \in I : a_i < k\} &\geq (\delta_0 + 1.005)\frac{k}{\log k} - \pi(k) \geq \Big(\delta_0 + \frac{1}{1000} \Big) \frac{k}{\log k}. \qedhere
\end{align*}
\end{proof}

The next lemma is the key input: if there are many $i \in I$ with $a_i < k$, then there are many \emph{distinct} integers $a_i < k$. For technical reasons, we work with a slightly stronger notion than just distinctness of the $a_i$. Define a set of integers $\mathcal{A}$ by
\begin{align}\label{eq:defn of mathcal A}
\mathcal{A} = \{1\leq \alpha < k : \text{there is a unique } i \in I \text{ such that }a_i = \alpha\}.
\end{align}
Hence, there are $\geq |\mathcal{A}|$ distinct integers $a_i < k$ with $i \in I$. We wish to show that $|\mathcal{A}|$ is large.

\begin{lemma}\label{lem:ai cant accumulate too much}
Let $1/100\leq \delta_0 \leq 0.23\log k$. Assume there are $\geq (\delta_0 + 1/1000) k/\log k$ indices $i \in I$ such that $a_i < k$. If $\mathcal{A}$ is defined as in \eqref{eq:defn of mathcal A}, then
\begin{align*}
|\mathcal{A}| &\geq \left(\delta_0 + \frac{1}{2000}\right) \frac{k}{\log k}.
\end{align*}
In particular, there are $\geq (\delta_0 + 1/2000) k/\log k$ distinct $a_i < k$ with $i \in I$.
\end{lemma}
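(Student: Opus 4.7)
The plan is to argue by contradiction. Suppose $|\mathcal{A}| < (\delta_0 + 1/2000)\,k/\log k$. Combined with the hypothesis that at least $(\delta_0 + 1/1000)\,k/\log k$ indices $i \in I$ satisfy $a_i < k$, this yields at least $k/(2000 \log k)$ ``collision indices'': indices $i \in I$ whose value $a_i$ is shared by some other $j \in I$. The aim is to leverage these collisions to produce many rational points on a single low-height auxiliary curve, contradicting Proposition \ref{prop:quant faltings theorem}.

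For each collision $a_i = a_j = \alpha$, the identity \eqref{eq:ternary equation} specializes to $\alpha(t_i^\ell - t_j^\ell) = (i-j)\,d^\ell$, and the divisibility $\alpha \mid (i-j)$ from Lemma \ref{lem:factorization of n plus i d ell} guarantees that $A := (i-j)/\alpha$ is a nonzero integer. Dividing through by $\alpha d^\ell$ places $(t_i/d,\,t_j/d)$ on the curve $X^\ell - Y^\ell = A$; replacing $Y$ by $-Y$, which is legitimate because $\ell$ is odd, yields a rational point on $X^\ell + Y^\ell = A$. The pairwise coprimality of the $t_i$ from Lemma \ref{lem:factorization of n plus i d ell} ensures that distinct ordered pairs $(i,j)$ produce distinct rational points.

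The main step is a refined pigeonhole argument producing a single curve $X^\ell + Y^\ell = A_0$ with $|A_0| \leq (\log k)^{O(1)}$ and at least $k^{1-o(1)}$ rational points. The indices $i$ within a given collision class all satisfy $i \equiv -n\,\overline{d^\ell} \pmod{\alpha}$, so they lie in a single arithmetic progression of common difference $\alpha$ inside $[0,k-1]$; writing these indices as $i_0 + \alpha e_1 < \cdots < i_0 + \alpha e_{m_\alpha}$, the integers $A = e_s - e_t$ are differences of a set of size $m_\alpha$ contained in $[0,k/\alpha]$. For classes in which $\alpha m_\alpha$ is comparable to $k$, the average consecutive-in-class gap $e_{s+1}-e_s$ is $O(k/(\alpha m_\alpha))$, and a Markov/averaging argument then produces a positive proportion of consecutive pairs with $|A|$ of polylogarithmic size. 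The budget constraint $\prod_{i \in I} a_i \leq k!$ from \eqref{eq:special subset S dividing factorial} is used to rule out the pathological alternative in which every collision class is spread so thinly that no low-height $A_0$ collects many pairs. Pigeonholing the resulting collision pairs over the $O((\log k)^{O(1)})$ admissible values of $A$ selects the desired $A_0$.

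To conclude, Proposition \ref{prop:quant faltings theorem} bounds the number of rational points on $X^\ell + Y^\ell = A_0$ by $\exp\!\bigl(5^{\ell^4}(\log 3|A_0|)(\log\log 3|A_0|)\bigr)$; for $|A_0| \leq (\log k)^{O(1)}$ and $\ell \leq (\log\log k)^{1/5}$, this is at most $\exp((\log\log k)^{O(1)})$, which is much smaller than $k^{1-o(1)}$, giving the desired contradiction. The main obstacle, and where I expect the real work to be, is the refined pigeonhole: one must simultaneously exploit the arithmetic-progression structure within each collision class and the multiplicative constraint \eqref{eq:special subset S dividing factorial} to show that collision pairs cannot avoid concentrating on polylogarithmic-height curves. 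The weaker naive pigeonhole, which distributes the $\sim k/\log k$ collision pairs over the full range $|A| \leq k$, is insufficient and must be replaced by this density-based refinement.
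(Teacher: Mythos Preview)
Your overall architecture matches the paper's: assume $|\mathcal{A}|$ is small, harvest $\gg k/\log k$ collision indices, manufacture $\gg k/(\log k)^{O(1)}$ rational points on a single curve $X^\ell+Y^\ell=A_0$ with $|A_0|\le(\log k)^{O(1)}$, and contradict Proposition~\ref{prop:quant faltings theorem}. Two points need attention.

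\textbf{Distinctness of the rational points.} Pairwise coprimality of the $t_i$ does \emph{not} by itself guarantee that distinct ordered pairs $(i,j)$ yield distinct points $(t_i/d,-t_j/d)$: if $|t_i|=|t_{i'}|=1$ then $\gcd(t_i,t_{i'})=1$ yet $t_i=\pm t_{i'}$ is possible. The paper first invokes Proposition~\ref{prop:not many ti are equal to one} to discard the at most $20$ indices with $|t_i|=1$ (this costs only $O(1)$ from the collision count), after which coprimality forces $|t_i|\ge k$ and hence $t_i\neq\pm t_{i'}$ for $i\neq i'$. You need this step.

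\textbf{The pigeonhole step.} Your appeal to the budget constraint $\prod_{i\in I}a_i\le k!$ is a red herring here: that inequality is a multiplicative constraint on the values $a_i$ and says nothing about the spacing of indices within a collision class. The paper does not use it in this lemma. The actual mechanism (Lemma~\ref{lem:not enough distinct ai gives many points on curve}) is: dyadically decompose on the size of $\alpha$ to a range $(N/2,N]$, then split into two cases. If $N>k/(\log k)^3$, the elementary bound $\#T(\alpha)\le k/\alpha+1$ (Lemma~\ref{lem:at most k by alpha plus 1 indices with ai = alpha}) forces each class to be small, so there are $\gg k/(\log k)^{O(1)}$ distinct $\alpha$'s; for each, \emph{any} pair $i,j\in T(\alpha)$ gives $|A|=|i-j|/\alpha<k/(N/2)\ll(\log k)^3$. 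If $N\le k/(\log k)^3$, one covers $[0,k-1]$ by intervals of length $N(\log k)^3$ and shows (again via the bound $\#T(\alpha)\ll k/N$) that many intervals contain at least two elements of $T(\alpha)$; pairs within an interval have $|i-j|\le N(\log k)^3$, hence $|A|\ll(\log k)^3$. Your Markov/averaging idea on consecutive gaps is in the right spirit and can be pushed through, but it too ultimately rests on $\#T(\alpha)\le k/\alpha+1$ and a dyadic split on $\alpha$, not on \eqref{eq:special subset S dividing factorial}.
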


In using Lemma \ref{lem:ai cant accumulate too much}, we only need the last statement on the number of distinct $a_i < k$. However, in proving the lemma we need the stronger statement involving the cardinality of $\mathcal{A}$.

We can prove Proposition \ref{prop:ai < k have pos density} assuming Lemma \ref{lem:ai cant accumulate too much}. We restate Proposition \ref{prop:ai < k have pos density} here.

\begin{proposition}\label{prop:restated ai < k have pos density}
Assume that $k$ is sufficiently large and $\ell$ is a prime with $5\leq \ell \leq (\log \log k)^{1/5}$ and $\textup{gcd}(k,\ell)=1$. Assume there are nonzero integers $n$ and $t$ and a positive integer $d$ with $\textup{gcd}(n,d)=1$ such that \eqref{eq:prod over AP is ellth power} holds, and let the integers $a_i$ be as in Lemma \ref{lem:factorization of n plus i d ell}. Then $\#\{a_i : 1 \leq a_i < k\} \geq 0.23 k$.
\end{proposition}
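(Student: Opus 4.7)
The plan is to prove Proposition \ref{prop:restated ai < k have pos density} by iteratively combining the three lemmas already established in this section. The starting point is Lemma \ref{lem:spark for mass inc, many i with small ai}, which supplies $\#\{i \in I : a_i < k\} \geq 0.3 k/\log k$. I would immediately feed this into Lemma \ref{lem:ai cant accumulate too much} with $\delta_0 = 0.299$ (noting that $1/100 \leq 0.299 \leq 0.23 \log k$ for large $k$), obtaining a lower bound of $0.2995 k/\log k$ on the number of distinct values $a_i < k$ with $i \in I$.

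Once a distinct-value bound of the form $c \cdot k/\log k$ with $c \leq 0.23 \log k$ is in hand, I would feed it into Lemma \ref{lem:many distinct ai implies more indices i} with $\delta_0 = c$ to obtain $\#\{i \in I : a_i < k\} \geq (c + 1/1000) k/\log k$, and then back into Lemma \ref{lem:ai cant accumulate too much} with $\delta_0 = c$ to obtain an upgraded distinct-value bound $(c + 1/2000) k/\log k$. Each full cycle thus increments the constant $c$ by exactly $1/2000$, as long as the hypothesis $\delta_0 \leq 0.23 \log k$ continues to hold.

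Starting from $c = 0.2995$ and iterating, after roughly $460 \log k$ cycles the constant $c$ must exceed $0.23 \log k$, at which point the number of distinct $a_i < k$ with $i \in I$ exceeds $0.23 \log k \cdot k/\log k = 0.23 k$. Since $\#\{a_i : 1 \leq a_i < k\} \geq \#\{a_i : i \in I,\, 1 \leq a_i < k\}$, this gives the proposition. I expect no substantive obstacle at this final assembly stage: the genuine arithmetic content — in particular the use of Proposition \ref{prop:quant faltings theorem} — has already been packaged into Lemma \ref{lem:ai cant accumulate too much}, and what remains is only careful bookkeeping of constants. The one thing to verify is that $\delta_0$ stays inside $[1/100, 0.23 \log k]$ throughout the iteration: the lower endpoint is automatic from $c \geq 0.2995$, and the upper endpoint is precisely the stopping criterion that forces the desired conclusion.
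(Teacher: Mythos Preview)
Your proposal is correct and follows essentially the same route as the paper: start from Lemma \ref{lem:spark for mass inc, many i with small ai}, apply Lemma \ref{lem:ai cant accumulate too much} once to convert the index count into a distinct-value count, and then iterate the pair (Lemma \ref{lem:many distinct ai implies more indices i}, Lemma \ref{lem:ai cant accumulate too much}) to push $\delta_0$ upward by $1/2000$ per cycle until it exceeds $0.23\log k$. The only differences are cosmetic choices of starting constant ($0.299$ versus the paper's $0.29$), and your bookkeeping of the stopping criterion matches the paper's.
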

\begin{proof}
Lemma \ref{lem:spark for mass inc, many i with small ai} implies there are $\geq 0.3k/\log k$ elements $i \in I$ such that $a_i < k$. Hence there are $\geq (0.29 + 1/1000) k/\log k$ elements $i \in I$ with $a_i < k$, so Lemma \ref{lem:ai cant accumulate too much} implies there are $\geq 0.29 k/\log k$ distinct integers $a_i < k$ with $i \in I$.

Lemmas \ref{lem:many distinct ai implies more indices i} and \ref{lem:ai cant accumulate too much} together imply that if $1/100\leq\delta_0 \leq 0.23\log k$ and there are $\geq \delta_0 k/\log k$ distinct integers $a_i < k$ with $i \in I$, then there are $\geq (\delta_0 + 1/2000) k/\log k$ distinct integers $a_i < k$ with $i \in I$. In other words, if $\delta_0 \in [1/100,0.23\log k]$, then
\begin{align}\label{eq:mass of ai set increase}
\#\{a_i : i \in I, a_i < k\} \geq \delta_0 \frac{k}{\log k} \Longrightarrow \#\{a_i : i \in I, a_i < k\} \geq \Big(\delta_0 + \frac{1}{2000}\Big) \frac{k}{\log k}.
\end{align}
This allows us to increase the ``mass'' or cardinality of the set $\{a_i : i \in I, a_i < k\}$.

We repeatedly apply \eqref{eq:mass of ai set increase}, starting with $\delta_0 = 0.29$. The size of $\delta_0$ increases by at least $\frac{1}{2000}$ every time we apply \eqref{eq:mass of ai set increase}, so after $\ll \log k$ uses of \eqref{eq:mass of ai set increase} the condition $\delta_0 \in [\frac{1}{100},0.23\log k]$ must fail. If the condition fails then $\delta_0 \geq 0.23\log k$, and therefore
\begin{align*}
\#\{a_i :i \in I, a_i > k\} &\geq 0.23 k. \qedhere
\end{align*}
\end{proof}

We state the following intermediate result which assists in the proof of Lemma \ref{lem:ai cant accumulate too much}.

\begin{lemma}\label{lem:not enough distinct ai gives many points on curve}
Assume $k$ is sufficiently large and $\ell$ is a prime with $5\leq \ell \leq (\log \log k)^{1/5}$ and $\textup{gcd}(k,\ell)=1$. Let $1/100\leq \delta_0 \leq 0.23\log k$. Assume there are $\geq (\delta_0 + 1/1000) k/\log k$ indices $i \in I$ such that $a_i < k$, and that $|\mathcal{A}| < (\delta_0 + 1/2000) k/\log k$, where $\mathcal{A}$ is defined in \eqref{eq:defn of mathcal A}. Then there is some fixed nonzero integer $1\leq |A_0| \leq (\log k)^{O(1)}$ such that
\begin{align*}
t_i^\ell + (-t_j)^\ell = A_0d^\ell
\end{align*}
for $\gg k/(\log k)^{O(1)}$ pairs $(i,j)$. Each such $t_i,t_j$ satisfies $|t_i|,|t_j|\neq 1$.
\end{lemma}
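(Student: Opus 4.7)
My plan is to extract the required auxiliary equation from the basic identity $a_i t_i^\ell - a_j t_j^\ell = (i-j) d^\ell$: whenever $a_i = a_j = \alpha$, Lemma \ref{lem:factorization of n plus i d ell} forces $\alpha \mid (i-j)$ (using $\gcd(\alpha,d)=1$), so $t_i^\ell - t_j^\ell = m d^\ell$ with $m = (i-j)/\alpha$ a nonzero integer. Since $\ell$ is odd this is the same as $t_i^\ell + (-t_j)^\ell = m d^\ell$, so my task is to produce $\gg k/(\log k)^{O(1)}$ pairs all sharing a single common value $m = A_0$ of size $(\log k)^{O(1)}$.

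For $\alpha \in [1,k-1]$ set $m_\alpha = \#\{i \in I : a_i = \alpha\}$ and $\mathcal{B} = \{\alpha : m_\alpha \geq 2\}$, so $\mathcal{A}$ and $\mathcal{B}$ are disjoint. The hypotheses give $\#\{i \in I : a_i < k\} - |\mathcal{A}| \geq k/(2000 \log k)$, and this quantity equals $\sum_{\alpha \in \mathcal{B}} m_\alpha$. For each $\alpha \in \mathcal{B}$, list the indices in $I$ with $a_i = \alpha$ in increasing order as $i_1^\alpha < \cdots < i_{m_\alpha}^\alpha$, and keep only the consecutive pairs $(i_r^\alpha, i_{r+1}^\alpha)$. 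Writing $i_{r+1}^\alpha - i_r^\alpha = \alpha b_r^\alpha$ with $b_r^\alpha$ a positive integer, the number of such pairs is
\begin{equation*}
N_p \;=\; \sum_{\alpha \in \mathcal{B}} (m_\alpha - 1) \;\geq\; \tfrac{1}{2} \sum_{\alpha \in \mathcal{B}} m_\alpha \;\geq\; \frac{k}{4000 \log k}.
\end{equation*}

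The crucial estimate is that $\sum_r b_r^\alpha = (i_{m_\alpha}^\alpha - i_1^\alpha)/\alpha \leq (k-1)/\alpha$ telescopes, and since $\mathcal{B} \subseteq [1,k-1]$,
\begin{equation*}
\sum_{\alpha \in \mathcal{B}} \sum_r b_r^\alpha \;\leq\; (k-1) \sum_{\alpha=1}^{k-1} \frac{1}{\alpha} \;\ll\; k \log k.
\end{equation*}
Hence the average of $b_r^\alpha$ over the $N_p$ consecutive pairs is $\ll (\log k)^2$, and Markov's inequality yields $\geq N_p/2 \gg k/\log k$ pairs with $b_r^\alpha \leq C(\log k)^2$ for some absolute $C$. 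Since the range of values is of size $O((\log k)^2)$, pigeonhole locates a single $A_0 \in [1, C(\log k)^2]$ that is attained by $\gg k/(\log k)^3$ of the pairs, each giving $t_{i_{r+1}^\alpha}^\ell + (-t_{i_r^\alpha})^\ell = A_0 d^\ell$. Finally, Proposition \ref{prop:not many ti are equal to one} bounds the indices $i \in [0,k-1]$ with $a_i < k$ and $|t_i|=1$ by $20$, so at most $40$ of the selected pairs touch such an index; discarding them still leaves $\gg k/(\log k)^{O(1)}$ pairs, all with $|t_i|, |t_j| \neq 1$.

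The main obstacle I expect is the scenario where $\mathcal{B}$ is dominated by very small $\alpha$, for which an individual consecutive pair can have $b_r^\alpha$ as large as $k/\alpha$; the telescoping identity together with $\sum_{\alpha=1}^{k-1} 1/\alpha \ll \log k$ is exactly what absorbs this worst case and keeps the average of $b_r^\alpha$ polylogarithmic. Because the paper only needs $|A_0|$ to be polylogarithmic (and not, say, bounded), no deeper structural input on the $a_i$ (for instance on $\omega(a_i)$, or on $\ell$-th powers in arithmetic progressions) is required at this step.
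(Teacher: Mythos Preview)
Your argument is correct, and it is cleaner than the route the paper takes. Both proofs begin identically: from the hypotheses one gets $\sum_{\alpha:\,m_\alpha\geq 2} m_\alpha > k/(2000\log k)$, and any pair $i\neq j$ with $a_i=a_j=\alpha$ yields $t_i^\ell-t_j^\ell = ((i-j)/\alpha)\,d^\ell$ with $(i-j)/\alpha$ a nonzero integer. The task in either approach is to manufacture $\gg k/(\log k)^{O(1)}$ pairs for which $|i-j|/\alpha$ is polylogarithmic.

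The paper does this by a dyadic decomposition on the size of $\alpha$, splitting into a ``large $\alpha$'' regime ($\alpha > k/(\log k)^3$, where $|i-j|/\alpha \ll (\log k)^3$ trivially, but one must use Lemma \ref{lem:at most k by alpha plus 1 indices with ai = alpha} to show there are enough such $\alpha$) and a ``small $\alpha$'' regime (where one covers $[0,k-1]$ by subintervals of length $\asymp \alpha(\log k)^3$, argues that many subintervals contain $\geq 2$ indices with $a_i=\alpha$, and extracts a pair from each). This case split and the interval-covering step account for most of the length of the paper's proof.

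Your telescoping observation $\sum_r b_r^\alpha = (i_{m_\alpha}^\alpha - i_1^\alpha)/\alpha \leq (k-1)/\alpha$, summed to $\sum_\alpha \sum_r b_r^\alpha \ll k\log k$, replaces all of this at once: it simultaneously handles large and small $\alpha$ without distinguishing them, and the Markov step immediately isolates $\gg k/\log k$ pairs with $b_r^\alpha \ll (\log k)^2$. You then pigeonhole on the value of $b_r^\alpha$ exactly as the paper does on $A_{i,j}$. The resulting bounds are of the same quality (indeed your exponents are slightly smaller). Your handling of the $|t_i|=1$ constraint at the end (each bad index lies in at most two consecutive pairs, so at most $40$ pairs are lost) is also a bit more economical than the paper's, which excises the bad indices earlier and carries the modified sets $T(\alpha)$ through the whole argument.
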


\begin{proof}[Proof of Lemma \ref{lem:ai cant accumulate too much} assuming Lemma \ref{lem:not enough distinct ai gives many points on curve}]
We assume by way of contradiction that $\mathcal{A} < (\delta_0 + 1/2000) k/\log k$. We may then apply Lemma \ref{lem:not enough distinct ai gives many points on curve} to obtain some nonzero integer $|A_0| \ll (\log k)^{O(1)}$ such that
\begin{align*}
t_i^\ell + (-t_j)^\ell = A_0d^\ell
\end{align*}
for $\gg k/(\log k)^{O(1)}$ pairs $(i,j)$, and each such $t_i,t_j$ satisfies $|t_i|,|t_j|\neq 1$. 

We claim that
\begin{align*}
(\pm t_i/d,\pm t_j/d) \neq (\pm t_{i'}/d,\pm t_{j'}/d)
\end{align*}
whenever $(i,j) \neq (i',j')$; here the $\pm$ signs are chosen independently. If $(i,j) \neq (i',j')$, then either $i \neq i'$ or $j \neq j'$. Assume without loss of generality that $i \neq i'$. Since $|t_i|,|t_{i'}|\neq 1$, we see that $|t_i|,|t_{i'}| \geq k$ and are coprime. Then $t_i \neq \pm t_{i'}$, so $\pm t_i/d \neq \pm t_{i'}/d$.

The claim implies that the curve with affine model $X^\ell + Y^\ell = A_0$ contains $\gg k/(\log k)^{O(1)}$ rational points. But this contradicts Proposition \ref{prop:quant faltings theorem} since the upper bound on $\ell$ and $H \ll (\log k)^{O(1)}$ imply
\begin{align*}
\exp(5^{\ell^4} (\log 3H)(\log \log 3H))&\leq \exp((\log k)^{1/2}). \qedhere
\end{align*}
\end{proof}

It therefore suffices to prove Lemma \ref{lem:not enough distinct ai gives many points on curve}. Before doing so, we need a lemma to bound the number of $i \in [0,k-1]$ for which $a_i = \alpha$.
\begin{lemma}\label{lem:at most k by alpha plus 1 indices with ai = alpha}
Let $1\leq \alpha < k$ be an integer and assume that $a_{i_1} = a_{i_2} = \cdots = a_{i_r} = \alpha$ for some distinct integers $i_j \in [0,k-1]$. Then $r \leq k/\alpha + 1$.
\end{lemma}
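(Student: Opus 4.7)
The plan is to show that the condition $a_{i_j} = \alpha$ forces all the indices $i_j$ to lie in a single residue class modulo $\alpha$, from which the bound $r \leq k/\alpha + 1$ follows by a counting argument on integers in $[0,k-1]$.

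First I would unpack the defining factorization. By Lemma \ref{lem:factorization of n plus i d ell}, for each $j$ we have $n + i_j d^\ell = a_{i_j} t_{i_j}^\ell = \alpha t_{i_j}^\ell$, so in particular $\alpha$ divides $n + i_j d^\ell$. Hence all the residues $i_j d^\ell \pmod{\alpha}$ are equal to $-n \pmod{\alpha}$. The same lemma tells us that $\gcd(a_{i_j}, d) = \gcd(\alpha, d) = 1$, so $d^\ell$ is invertible modulo $\alpha$ and the congruences $i_j d^\ell \equiv -n \pmod{\alpha}$ simplify to
\begin{align*}
i_j \equiv -n\,\overline{d^\ell} \pmod{\alpha}, \qquad j = 1, \ldots, r.
\end{align*}
Thus all the distinct integers $i_1, \ldots, i_r$ lie in one fixed residue class modulo $\alpha$.

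The last step is purely a counting observation: in the interval $[0, k-1]$ of length $k$, the number of integers congruent to any fixed residue modulo $\alpha$ is at most $\lceil k/\alpha \rceil \leq k/\alpha + 1$. Therefore $r \leq k/\alpha + 1$, as required.

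There is no real obstacle here; the proof is a one-line application of the coprimality of $\alpha$ and $d$ together with the standard bound on integers in a residue class. The only thing to be careful about is explicitly invoking $\gcd(\alpha, d) = 1$ (which comes from $\gcd(n, d) = 1$ via Lemma \ref{lem:factorization of n plus i d ell}) to justify the inversion of $d^\ell$ modulo $\alpha$.
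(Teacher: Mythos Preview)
Your proof is correct and follows essentially the same approach as the paper: both arguments use $\gcd(\alpha,d)=1$ to deduce that all the indices $i_j$ lie in a single residue class modulo $\alpha$, and then bound the number of such integers in $[0,k-1]$. The paper phrases this via differences, obtaining $\alpha \mid (i_k - i_j)$ and hence $i_{j+1}-i_j \geq \alpha$, while you go directly to the congruence $i_j \equiv -n\,\overline{d^\ell}\pmod{\alpha}$; the content is the same.
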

\begin{proof}
We may assume without loss of generality that $0 \leq i_1 < i_2 < \cdots < i_r \leq k-1$. For any distinct indices $i_j < i_k$, $\alpha(t_{i_k}^\ell - t_{i_j}^\ell)=(i_k-i_j) d^\ell$. Since $\text{gcd}(\alpha,d)=1$ we must have $\alpha \mid (i_k - i_j)$. It follows that $i_{j+1} - i_j \geq \alpha$ for $j\geq 1$, and by induction $i_j \geq (j-1) \alpha$ for $j\geq 1$. The result follows since $(r-1)\alpha \leq i_r \leq k$.
\end{proof}

\begin{proof}[Proof of Lemma \ref{lem:not enough distinct ai gives many points on curve}]
Given an integer $1\leq \alpha < k$, we define $T'(\alpha) = \{i \in I : a_i = \alpha\}$. Obviously $\#T'(\alpha)$ is a nonnegative integer, and $\#T'(\alpha) = 1$ if and only if there is a unique $i \in I$ such that $a_i = \alpha$. It is also obvious that $T'(\alpha) \cap T'(\alpha') = \emptyset$ if $\alpha \neq \alpha'$.

By assumption, there are $\geq (\delta_0 + 1/1000) k/\log k$ elements $i\in I$ such that $a_i < k$. We therefore have
\begin{align}\label{eq:lower bound number of indices with ai < k}
\Big(\delta_0 + \frac{1}{1000}\Big) \frac{k}{\log k} \leq \sum_{\substack{i \in I \\ a_i < k}} 1 = \sum_{1\leq \alpha < k} \#T'(\alpha) = \sum_{\substack{1\leq \alpha < k \\ \#T'(\alpha) \geq 1}} \#T'(\alpha).
\end{align}
We also have the assumption
\begin{align}\label{eq:assuming for contradiction that few alpha are distinct}
|\mathcal{A}| = \sum_{\substack{1 \leq \alpha < k \\ \#T'(\alpha)=1}} 1 < \Big(\delta_0 + \frac{1}{2000}\Big) \frac{k}{\log k}.
\end{align}
Combining \eqref{eq:lower bound number of indices with ai < k} and \eqref{eq:assuming for contradiction that few alpha are distinct} yields
\begin{align}\label{eq:contradictory lower bound many alpha with collisions}
\frac{k}{2000\log k} < \sum_{\substack{1\leq \alpha < k \\ \#T'(\alpha) \geq 2}} \#T'(\alpha).
\end{align}

We now restrict to indices $i$ such that $|t_i| \neq 1$. By Proposition \ref{prop:not many ti are equal to one} there are at most 20 integers $i \in [0,k-1]$ with $a_i < k$ and $|t_i| = 1$. We may assume at least one of these integers $i$ exists, otherwise there is nothing to do. Write $i_1,\ldots,i_r$ for the integers in $[0,k-1]$ such that $a_i < k$ and $|t_{i_j}| = 1$, where $1\leq r \leq 20$. Without loss of generality, we may also assume each $i_j$ is an element of some $T'(\alpha)$ with $\#T'(\alpha) \geq 2$. Let $\alpha_{i_1},\ldots,\alpha_{i_r}$ be such that $i_j \in T'(\alpha_{i_j})$, where it is possible that $\alpha_{i_j} = \alpha_{i_{j'}}$ for $j\neq j'$. There are $\leq 20$ such $\alpha_{i_j}$. The contribution to the right-hand side of \eqref{eq:contradictory lower bound many alpha with collisions} from those $\alpha_{i_j}$ with $\#T'(\alpha_{i_j})\leq 21$ is $\leq 20 \cdot 21 =420$. This implies
\begin{align}\label{eq:many large T alpha with bad i removed}
\frac{k}{3000\log k} < \sum_{\substack{1\leq \alpha < k \\ \#T(\alpha) \geq 2}} \#T(\alpha),
\end{align}
where we define $T(\alpha) = T'(\alpha) \backslash \{i_1,\ldots,i_r\}$. Therefore, if $i \in T(\alpha)$ for some $\alpha$, then $|t_i| \neq 1$.

We next perform a dyadic decomposition on the size of $\alpha$. By \eqref{eq:many large T alpha with bad i removed} and the pigeonhole principle, there exists some $N$ with $1 \ll N \ll k$ such that
\begin{align}\label{eq:after dyadic decompose into N}
\frac{k}{(\log k)^2} \ll \sum_{\substack{N/2 < \alpha \leq N \\ \#T(\alpha) \geq 2}} \#T(\alpha).
\end{align}

We consider two cases, depending on whether or not $N> k/(\log k)^3$. If $N > k/(\log k)^3$, then Lemma \ref{lem:at most k by alpha plus 1 indices with ai = alpha} implies $\#T(\alpha) \ll (\log k)^3$, and we have
\begin{align}\label{eq:when dyadic N is large}
\frac{k}{(\log k)^5} \ll \sum_{\substack{N/2 < \alpha \leq N \\ \#T(\alpha) \geq 2}} 1.
\end{align}

Assume now that $N \leq k/(\log k)^3$. The contribution to \eqref{eq:after dyadic decompose into N} from those $\alpha$ with $\#T(\alpha) \leq \epsilon_0k N^{-1} (\log k)^{-2}$ is $O(\epsilon_0k/(\log k)^2)$, so by taking $\epsilon_0 > 0$ to be a sufficiently small constant we obtain
\begin{align*}
\frac{k}{(\log k)^2} \ll \sum_{\substack{N/2 < \alpha \leq N \\ \#T(\alpha) \gg kN^{-1}(\log k)^{-2}}} \#T(\alpha).
\end{align*}
Lemma \ref{lem:at most k by alpha plus 1 indices with ai = alpha} implies $\#T(\alpha) \ll k/N$, and therefore
\begin{align}\label{eq:there are k over U good alphas}
\frac{N}{(\log k)^2} \ll \sum_{\substack{N/2 < \alpha \leq N \\ \#T(\alpha) \gg kN^{-1}(\log k)^{-2}}} 1.
\end{align}

Fix $\alpha \in (N/2,N]$ with $\#T(\alpha) \gg k N^{-1}(\log k)^{-2}$. We may cover the interval $[0,k-1]$ with $\ll k N^{-1}(\log k)^{-3}$ disjoint intervals of length $\leq N(\log k)^3$. Writing $I_r$ for these intervals, we observe
\begin{align*}
\frac{k}{N(\log k)^2}\ll \#T(\alpha) \leq  \sum_{r \ll k N^{-1}(\log k)^{-3}} |T(\alpha) \cap I_r|.
\end{align*}
Those $r$ with $|T(\alpha) \cap I_r| \leq 1$ contribute a total of $\ll k N^{-1}(\log k)^{-3}$, and therefore
\begin{align*}
\frac{k}{N(\log k)^2} &\ll \sum_{\substack{r \ll k N^{-1}(\log k)^{-3} \\ |T(\alpha) \cap I_r| \geq 2}} |T(\alpha) \cap I_r|.
\end{align*}
Observe now that $|T(\alpha) \cap I_r| \leq 100(\log k)^3$. Indeed, if there are $>100(\log k)^3$ elements of $T(\alpha) \cap I_r$, then by the pigeonhole principle there are distinct $i,j \in T(\alpha)$ with $|i-j| \leq \frac{N}{50}$, but $\alpha(t_i^\ell - t_j^\ell) = (i-j)d^\ell$ implies $\alpha$ divides the nonzero integer $|i-j| \leq \frac{N}{50}$. Since $\alpha > N/2$ this is a contradiction. Therefore
\begin{align}\label{eq:each alpha has many associated rationals}
\frac{k}{N(\log k)^5} &\ll \sum_{\substack{r \ll k N^{-1}(\log k)^{-3} \\ |T(\alpha) \cap I_r| \geq 2}} 1.
\end{align}

We can now produce many pairs $(i,j)$ satisfying the statement of the lemma. Consider first the easier case in which \eqref{eq:when dyadic N is large} holds. Then there are $\gg k/(\log k)^5$ integers $\alpha \gg k/(\log k)^3$ such that $\#T(\alpha) \geq 2$ and $|t_i| \neq 1$ for all $i \in T(\alpha)$. For each such $\alpha$ we arbitrarily choose distinct $i,j \in T(\alpha)$ and then observe that $\alpha(t_i^\ell - t_j^\ell) = (i-j) d^\ell$. Since $\text{gcd}(\alpha,d)=1$ we must have $\alpha \mid (i-j)$, and therefore $t_i^\ell - t_j^\ell = A_{i,j} d^\ell$ for some nonzero integer $A_{i,j}$ satisfying $|A_{i,j}| \ll (\log k)^3$. It follows that there are $\gg k/(\log k)^5$ pairs $(i,j)$ such that $t_i^\ell - t_j^\ell = A_{i,j}d^\ell$ with $1\leq |A_{i,j}| \ll (\log k)^{3}$, so by the pigeonhole principle there is some fixed integer $A_0$ with $1\leq |A_0| \ll (\log k)^3$ such that $t_i^\ell - t_j^\ell = A_0d^\ell$ for $\gg k/(\log k)^{8}$ pairs $(i,j)$.

Now assume $N\leq k/(\log k)^3$, so that \eqref{eq:there are k over U good alphas} holds. There are then $\gg N/(\log k)^2$ integers $\alpha \in (N/2,N]$ such that \eqref{eq:each alpha has many associated rationals} holds. For each integer $r$ such that $|T(\alpha) \cap I_r| \geq 2$, we may choose distinct $i,j$ such that $|i-j| \leq N(\log k)^3$. We then form the ternary equation
\begin{align*}
t_i^\ell - t_j^\ell = \frac{i-j}{\alpha}d^\ell,
\end{align*}
and since $\alpha > N/2$ we see that each $r$ with $|T(\alpha) \cap I_r| \geq 2$ gives rise to a pair $(i,j)$ with
\begin{align*}
t_i^\ell - t_j^\ell = A_{i,j} d^\ell,
\end{align*}
where $A_{i,j}$ is an integer satisfying $1 \leq |A_{i,j}| \ll (\log k)^3$. There are $\gg N/(\log k)^2$ integers $\alpha$ each with $\gg k N^{-1}(\log k)^{-5}$ integers $r$ such that $|T(\alpha) \cap I_r| \geq 2$, so by arguing as before we find there are
\begin{align*}
\gg \frac{N}{(\log k)^2} \cdot \frac{k}{N(\log k)^5} = \frac{k}{(\log k)^7}
\end{align*}
pairs $(i,j)$ such that $t_i^\ell - t_j^\ell = A_{i,j} d^\ell$ for some $1\leq |A_{i,j}| \ll (\log k)^3$. By the pigeonhole principle there exists some $1\leq |A_0| \ll (\log k)^3$ such that there are $\gg k/(\log k)^{10}$ pairs $(i,j)$ with $t_i^\ell - t_j^\ell = A_0 d^\ell$.
\end{proof}

\section{The proof of Theorem \ref{thm:ell = 3}}\label{sec:ell = 3}

We state here some of the necessary tools from the proof of Theorem \ref{thm:ell = 3}. Some of the results could be stated in greater generality, but for the sake of clarity we state and prove results only with the case $\ell=3$ in mind.

The first result gives an upper bound on $|n|$ in terms of $d$ and $k$. It will be used to bound the naive height of points on elliptic curves.

\begin{proposition}\label{prop:ell=3 upper bound on n}
Assume there are nonzero integers $n$ and $t$ and a positive integer $d$ with $\textup{gcd}(n,d)=1$ such that \eqref{eq:prod over AP is ellth power} holds with $\ell=3$. Then $|n|\leq 432k^6d^{18}$.
\end{proposition}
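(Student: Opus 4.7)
My plan is to leverage the factorization $n + id^3 = a_i t_i^3$ from Lemma \ref{lem:factorization of n plus i d ell} (specialized to $\ell = 3$) together with the elementary polynomial identity
\[
(n+d^3)^2 - n(n+2d^3) = d^6.
\]
Substituting the factorizations for $i = 0, 1, 2$ gives $a_1^2 t_1^6 - a_0 a_2 t_0^3 t_2^3 = d^6$, and setting $X = a_1 t_1^2$, $Y = t_0 t_2$, $M = a_0 a_1 a_2$, then multiplying through by $a_1$ yields the cube-form Thue-like equation
\[
X^3 - M Y^3 = a_1 d^6.
\]
Since $\gcd(a_0, a_1) = \gcd(a_1, a_2) = 1$, $\gcd(a_1, d) = 1$, and the $t_i$ are pairwise coprime (all by Lemma \ref{lem:factorization of n plus i d ell}), one has $\gcd(X, Y) = 1$ and $\gcd(a_1^2, a_0 a_2) = 1$. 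Crucially, $X^3 = a_1 (n+d^3)^2$ and $M Y^3 = a_1 n(n+2d^3)$, so both $X$ and $M^{1/3}Y$ are of order $a_1^{1/3}|n|^{2/3}$ for large $|n|$, while their cubes differ only by $a_1 d^6$.

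The principal quantitative step is to factor over $\mathbb{Q}(\omega, M^{1/3})$:
\[
(X - M^{1/3} Y)(X - \omega M^{1/3} Y)(X - \omega^2 M^{1/3} Y) = a_1 d^6.
\]
The two complex-conjugate factors have product $X^2 + XY M^{1/3} + M^{2/3} Y^2$, which for large $|n|$ is comparable to $3 a_1^{2/3} |n|^{4/3}$. This yields the upper bound
\[
\bigl| X - M^{1/3} Y \bigr| \;\ll\; \frac{a_1^{1/3} d^6}{|n|^{4/3}}.
\]
If $M$ is a perfect cube, then $X - M^{1/3} Y$ is a nonzero integer (nonzero, since $X = M^{1/3} Y$ would force $a_1 d^6 = 0$), so its absolute value is at least one, immediately giving $|n|^{4/3} \ll a_1^{1/3} d^6$ and hence $|n| \ll a_1^{1/4} d^{9/2}$, a bound much stronger than the target.

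In the generic case where $M$ is not a cube, one must combine the above upper bound with a matching polynomial Thue-type lower estimate $|X - M^{1/3} Y| \gg_M |Y|^{-3/2+\delta}$ for small $\delta$; together with the size estimate $|Y| \asymp |n|^{2/3}/(a_0 a_2)^{1/3}$ this rearranges into an inequality of the shape $|n| \ll a_0 a_1 a_2 \cdot d^{18}$. The main obstacle is then to bound $a_0 a_1 a_2$ polynomially in $k$. I would handle this by replacing the triple of indices $\{0,1,2\}$ with an arbitrary triple $\{i_0-1, i_0, i_0+1\}$ of consecutive indices and invoking Lemma \ref{lem:subset divides factorial}, which forces $\prod_{i\in I} a_i \leq (k-1)!$ for a subset $I$ of size $\geq k - \pi(k)$; a pigeonhole argument then produces a triple with $a_{i_0-1} a_{i_0} a_{i_0+1} \ll k^{6}$, yielding the claimed $|n| \leq 432 k^6 d^{18}$. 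The specific constant $432 = 2^4 \cdot 3^3$ should emerge from carefully tracking the discriminant $-27 M^2$ of $x^3 - M$, the factor of $3$ from the cubic factorization, and the numerical constants in the complex-conjugate estimate.
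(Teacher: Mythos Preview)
Your approach has a genuine gap at the Thue step. The inequality you write as $|X - M^{1/3}Y| \gg_M |Y|^{-3/2+\delta}$ is a Thue/Roth-type irrationality measure for the cubic irrational $M^{1/3}$ (and the exponent should in any case be $-3/2-\varepsilon$ for Thue, or $-1-\varepsilon$ for Roth). The constant hidden in $\gg_M$ depends on $M = a_0 a_1 a_2$ in an \emph{ineffective} way, so it cannot be combined with your pigeonhole bound $M \ll k^6$ to produce any explicit inequality for $|n|$, let alone the precise constant $432$. Effective versions via Baker's method do exist, but the dependence on $M$ is at best exponential, not polynomial, so the argument still collapses. Note also that the Liouville bound, which \emph{is} effective and polynomial in $M$, gives only $|X - M^{1/3}Y| \gg M^{-1}|Y|^{-2}$; plugging in $|Y| \asymp |n|^{2/3}/(a_0 a_2)^{1/3}$ the powers of $|n|$ cancel against your upper bound $|X - M^{1/3}Y| \ll a_1^{1/3} d^6/|n|^{4/3}$, and you learn nothing about $|n|$.

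The paper's argument is entirely different and completely elementary. It shows, by direct size estimates, that if $|n| > 432 k^6 d^{18}$ then (i) the $a_i$ are pairwise distinct and (ii) all products $a_i a_j$ are distinct. It then appeals to an Erd\H{o}s-type result (Lemma~\ref{lem:prods all distinct}) that a set of integers in $[1,x]$ with all pairwise products distinct has density $o(1)$; combined with $\prod_{i\in S} a_i \le (k-1)!$ and Stirling, this forces a contradiction. No Diophantine approximation enters at all.
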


As in the proof of Theorem \ref{thm:main theorem}, we use a mass increment argument to show there are many distinct integers $a_i$.
\begin{proposition}\label{prop:mass inc for ell = 3}
Assume there are nonzero integers $n$ and $t$ and a positive integer $d$ with $\textup{gcd}(n,d)=1$ such that \eqref{eq:prod over AP is ellth power} holds with $\ell=3$. Let the integers $a_i$ be as in Lemma \ref{lem:factorization of n plus i d ell}. Let $\delta>0$ be a sufficiently small constant. If $k$ is sufficiently large and $d\leq \exp(k^{\delta/\log\log k})$, then $\#\{a_i : 1 \leq a_i < k\} \geq 0.23 k$.
\end{proposition}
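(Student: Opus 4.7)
The plan is to run the mass increment argument of Section \ref{sec:mass increment} with $\ell=3$ throughout, reusing as much of the existing machinery as possible. The combinatorial backbone --- the analogues of Lemmas \ref{lem:subset divides factorial}, \ref{lem:ai's bigger than k are distinct}, \ref{lem:spark for mass inc, many i with small ai}, \ref{lem:many distinct ai implies more indices i}, and \ref{lem:at most k by alpha plus 1 indices with ai = alpha} --- uses only the factorization and coprimality properties of Lemma \ref{lem:factorization of n plus i d ell} and does not depend on $\ell \geq 5$, so those statements transcribe verbatim with $\ell=3$. The only step requiring substantive modification is the analogue of Lemma \ref{lem:not enough distinct ai gives many points on curve}, because there the quantitative Faltings theorem (Proposition \ref{prop:quant faltings theorem}) is applied to the curves $X^\ell + Y^\ell = A_0$, which are of genus $\geq 2$ only when $\ell \geq 5$.

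First I would rerun the proof of Lemma \ref{lem:not enough distinct ai gives many points on curve} with $\ell=3$ to deduce, \emph{assuming for contradiction} that $\#\{a_i : 1 \leq a_i < k\} < 0.23 k$, that there exists a nonzero integer $A_0$ with $1 \leq |A_0| \ll (\log k)^{O(1)}$ such that
\begin{align*}
t_i^3 + (-t_j)^3 = A_0 d^3
\end{align*}
for $\gg k/(\log k)^{O(1)}$ distinct pairs $(i,j)$, with $|t_i|,|t_j|\neq 1$. As in the proof of Lemma \ref{lem:ai cant accumulate too much}, the pairwise coprimality of the $t_i$ together with $|t_i| \geq k$ ensure that the points $(t_i/d,-t_j/d)$ on the elliptic curve $E_{A_0}:X^3+Y^3=A_0$ are pairwise distinct as $(i,j)$ varies, so $E_{A_0}$ carries $\gg k^{1-o(1)}$ distinct rational points.

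Next I would control their heights. Lemma \ref{lem:factorization of n plus i d ell} gives $\gcd(t_i,d)=1$, so the naive height of $(t_i/d,-t_j/d)$ is $\max(|t_i|,|t_j|,d)$. From $|a_i t_i^3| = |n+id^3| \leq |n|+kd^3$ together with Proposition \ref{prop:ell=3 upper bound on n}, which yields $|n|\leq 432k^6 d^{18}$, we conclude $|t_i|,|t_j| \ll (kd)^{O(1)}$. Hence the canonical height of each of these points is $\ll \log(kd)$, and under the hypothesis $d\leq \exp(k^{\delta/\log\log k})$ this is at most $k^{O(\delta/\log\log k)}$. The contradiction will come from the uniform Mordell-Weil lattice-point count foreshadowed in the outline: a rank-$r$ elliptic curve has $\ll C^r B^{r/2}$ rational points of canonical height $\leq B$. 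For $E_{A_0}$ with $|A_0| \ll (\log k)^{O(1)}$, a $3$-descent / Selmer argument bounds the rank by $r \ll \omega(A_0) \ll \log\log k/\log\log\log k$. With $B\leq k^{O(\delta/\log\log k)}$ the count is at most $k^{O(r\delta/\log\log k)} = k^{o(1)}$ for $\delta$ sufficiently small, contradicting the lower bound $\gg k^{1-o(1)}$.

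The main obstacle is the last step: producing a rank bound for $E_{A_0}$ strong enough, uniformly in $|A_0| \ll (\log k)^{O(1)}$, to defeat the height. The naive divisor-bound approach to counting integral points (which only yields the Shorey-Tijdeman estimate $d \geq k^{c\log\log k}$) is too crude; one must genuinely exploit that canonical heights force distinct rational points on an elliptic curve to be logarithmically well-separated in the Mordell-Weil lattice, and then combine this with a rank bound arising from the special arithmetic of the Fermat cubic $X^3+Y^3=A_0$.
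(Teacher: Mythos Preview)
Your approach is essentially the paper's: rerun the mass increment argument verbatim, and replace the Faltings input by an elliptic-curve point count on $X^3+Y^3=A_0$ using a rank bound together with Proposition \ref{prop:count points in MW lattice} and the height control from Proposition \ref{prop:ell=3 upper bound on n}. Two points deserve more care.

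First, the rank bound. Your claim ``a 3-descent / Selmer argument bounds the rank by $r \ll \omega(A_0)$'' is too strong as stated. After passing to Weierstrass form $Y^2 = X^3 - D$ with $D \ll (\log k)^{O(1)}$ sixth-power-free, the descent bound (the paper cites Fouvry \cite{Fou1993}) is
\[
r \ll 1 + \omega(D) + \log h_3\bigl(\mathbb{Q}(\sqrt{-D})\bigr),
\]
and the class-number term cannot simply be dropped. The paper bounds it trivially by $\log h(-D) \ll \log\log k$ via the class number formula, obtaining only $r \ll \log\log k$. Fortunately this weaker bound still closes the argument: with $B \ll k^{O(\delta/\log\log k)}$ one gets a point count $\ll C^{r} B^{r/2} \ll (\log k)^{O(1)} k^{O(\delta)}$, which for small $\delta$ contradicts $\gg k/(\log k)^{O(1)}$.

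Second, uniformity in the lattice-point count. Proposition \ref{prop:count points in MW lattice} requires a lower bound $L$ on $\hat h(P)$ for nontorsion $P$, and since $A_0$ (hence $D$) is not bounded you need $L$ uniform in $D$. The paper obtains this from Silverman \cite{Sil1981}, using that the $j$-invariant is $0$ (hence integral); similarly the comparison $|h(P)-\hat h(P)| \ll \log\log k$ uses \cite{Sil1990}. Your sketch gestures at ``logarithmic separation in the Mordell--Weil lattice'' but does not address this uniformity, which is needed for the argument to go through.
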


Lastly, we need a count for rational points on an elliptic curve up to some height.

\begin{proposition}\label{prop:count points in MW lattice}
Let $E/\mathbb{Q}$ be an elliptic curve in quasi-minimal Weierstrass form $E: y^2=x^3+ax+b$, where $a,b \in \mathbb{Z}$. Assume $E$ has rank $r$. If $r\geq 1$, let $L > 0$ be a constant such that $\hat{h}(P) \geq L$ for every nontorsion $P \in E(\mathbb{Q})$, otherwise set $L=1$.  If $H\geq L$, then
\begin{align*}
\#\{P \in E(\mathbb{Q}) : \widehat{h}(P) \leq H\} \leq 16(9H/L)^{r/2}.
\end{align*}
\end{proposition}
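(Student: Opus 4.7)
The plan is to combine a standard sphere-packing estimate on the Mordell--Weil lattice with Mazur's torsion theorem. By the Mordell--Weil theorem, $E(\mathbb{Q})$ is a finitely generated abelian group, and the canonical height $\hat{h}$ is a nonnegative function that vanishes precisely on $E(\mathbb{Q})_{\mathrm{tors}}$ and descends to a positive definite quadratic form on the quotient $\Lambda := E(\mathbb{Q})/E(\mathbb{Q})_{\mathrm{tors}}$. This makes $\Lambda$ into a lattice of rank $r$ in the Euclidean space $\Lambda \otimes \mathbb{R}$ equipped with the norm $\|P\| := \sqrt{\hat{h}(P)}$.

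First I would bound the number of lattice points $P \in \Lambda$ with $\hat{h}(P) \leq H$ by a packing argument. By hypothesis, every nonzero $P \in \Lambda$ satisfies $\|P\| \geq \sqrt{L}$, so for any two distinct lattice points the difference is a nonzero lattice point of norm $\geq \sqrt{L}$; hence the open balls of radius $\sqrt{L}/2$ centered at distinct lattice points are pairwise disjoint. Each such ball around a point of norm $\leq \sqrt{H}$ is contained in the ball of radius $\sqrt{H} + \sqrt{L}/2$ about the origin, so comparing Euclidean volumes in $\mathbb{R}^r$ yields
$$\#\{P \in \Lambda : \hat{h}(P) \leq H\} \leq \left(\frac{2\sqrt{H}}{\sqrt{L}} + 1\right)^r \leq \left(3\sqrt{H/L}\right)^r = (9H/L)^{r/2},$$
where the last inequality uses the hypothesis $H \geq L$, giving $\sqrt{H/L} \geq 1$ and hence $2\sqrt{H/L} + 1 \leq 3\sqrt{H/L}$.

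Finally, because $\hat{h}$ is invariant under translation by torsion, each lattice coset having a representative of canonical height at most $H$ contributes exactly $|E(\mathbb{Q})_{\mathrm{tors}}|$ rational points to the count, all of the same canonical height. Mazur's theorem bounds $|E(\mathbb{Q})_{\mathrm{tors}}| \leq 16$, and multiplying by the lattice count gives the claimed bound $16 (9H/L)^{r/2}$. The case $r = 0$ is automatic: the lattice is trivial so only the torsion contribution remains, and $(9H/L)^{0} = 1$ matches the statement. There is no real obstacle here beyond correctly setting up the Mordell--Weil lattice and invoking Mazur; the quasi-minimality hypothesis is not needed for the counting step itself, and only serves to ensure that the standard framework for the canonical height applies in the form used elsewhere in the paper.
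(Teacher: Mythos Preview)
Your proof is correct and follows essentially the same strategy as the paper: reduce to a lattice-point count in the Mordell--Weil lattice via Mazur's torsion bound, obtain the estimate $(1+2\sqrt{H/L})^r$, and simplify using $H\geq L$. The only difference is that the paper quotes this lattice bound from work of Naccarato (with antecedents in Bombieri--Zannier and Dujella), whereas you supply the standard sphere-packing argument directly; your version is therefore slightly more self-contained, while the paper's is shorter by outsourcing the packing step.
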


\begin{proof}[Proof of Theorem \ref{thm:ell = 3} assuming Propositions \ref{prop:ell=3 upper bound on n}, \ref{prop:mass inc for ell = 3}, and \ref{prop:count points in MW lattice}]
The structure of the proof is similar to the proof of Theorem \ref{thm:main theorem}, only we use Proposition \ref{prop:count points in MW lattice} rather than Proposition \ref{prop:quant faltings theorem}.

Assume there is a nontrivial rational point on the curve $y^3 = x(x+1) \cdots (x+k-1)$ with $k$ sufficiently large and not divisible by 3. By Lemma \ref{lem:rat point on ES implies Dioph eq with ints} we have
\begin{align*}
t^3 = n(n+d^3)\cdots (n+(k-1)d^3)
\end{align*}
with $nt\neq 0$, $d\geq 1$, and $\text{gcd}(n,d)=1$. We may factor $n+id^3 = a_it_i^3$ by Lemma \ref{lem:factorization of n plus i d ell}. If $\delta > 0$ is a sufficiently small positive constant, then we claim that $d\geq \exp(k^{\delta/\log \log k})$.

Assume for contradiction that $d< \exp(k^{\delta/\log \log k})$. By Proposition \ref{prop:mass inc for ell = 3} we have
\begin{align*}
\#\{a_i : 1\leq a_i < k\} \geq 0.23 k.
\end{align*}
There are at most 20 integers $i\in [0,k-1]$ such that $|t_i| = 1$ (Proposition \ref{prop:not many ti are equal to one}), so
\begin{align*}
\#\{a_i : 1\leq a_i < k, |t_i| \neq 1\} \geq 0.23k - 20 \geq 0.229 k
\end{align*}
since $k$ is large. We then apply Proposition \ref{prop:dense sequence has large gcds} and argue as in the proof of Theorem \ref{thm:main theorem} to see that there are fixed, bounded, nonzero integers $A,B,C$ and $\gg k$ pairs $(i,j)$ such that
\begin{align*}
At_i^3 - Bt_j^3 = Cd^3.
\end{align*}
Recalling that $A,B$ are divisors of $a_i,a_j$ obtained after dividing both by $\text{gcd}(a_i,a_j)$, we see that $A$ and $B$ are coprime. This implies $A,B,C$ are pairwise coprime. By folding powers into $t_i,t_j,d$ we may also assume $A,B,C$ are cubefree. Define $\kappa=0$ if $C$ is even and $\kappa=1$ if $C$ is odd. If we set
\begin{align*}
V = 2^{2\kappa}AB \left(2^\kappa A \frac{t_i^3}{d^3} -2^{\kappa-1}C \right), \ \ \ \ \ \ \ \ \ U = 2^{2\kappa}AB \frac{t_it_j}{d^2},
\end{align*}
then $V^2 = U^3 + 2^{6\kappa-2}(ABC)^2$. In the case that $C$ is odd we might have that 4 divides one of $A$ or $B$, in which case $16(ABC)^2$ is divisible by $2^6$. If this occurs we divide through by $2^6$ and adjust $U,V$ accordingly, so that regardless of the parity of $C$ we may consider the elliptic curve
\begin{align*}
E: Y^2 = X^3 + \gamma,
\end{align*}
where $\gamma$ is a bounded, nonzero integer which is not divisible by the sixth power of any prime. The rank $r$ of $E$ is bounded since $|\gamma|$ is bounded. Note that the $j$-invariant of $E$ is zero. We also note that the mapping of triples $(t_i,t_j,d)$ satisfying $At_3^3-Bt_j^3 = Cd^3$ to rational points $(X,Y)$ satisfying $Y^2 = X^3+\gamma$ is injective.

From Proposition \ref{prop:ell=3 upper bound on n} and the triangle inequality we find
\begin{align*}
|t_i|^3 \leq a_i|t_i|^3 = |n+id^3| \leq 432k^6d^{18} + kd^3 \leq 433k^6d^{18},
\end{align*}
so $|t_i| \leq (kd)^{O(1)}$. Since there are $\gg k$ pairs $(i,j)$ such that $At_i^3 - Bt_j^3 = Cd^3$, we see that the elliptic curve $E$ has $\gg k$ rational points $P$ with naive height $ H(P)\ll (kd)^{O(1)}$. Therefore, $h(P) \ll \log(kd) \ll (\log k)(\log 2d)$. Since $|\gamma|$ is bounded, work of Silverman \cite{Sil1990} implies the difference between $h(P)$ and the canonical height $\hat{h}(P)$ is bounded, and we obtain
\begin{align*}
k \ll \#\{P \in E(\mathbb{Q}) : \widehat{h}(P) \leq H\},
\end{align*}
where $H = C(\log k)(\log 2d)$ for some positive constant $C$. Since $| \gamma|$ is bounded there is some constant $L>0$ such that $\widehat{h}(P) \geq L$ for every $P \in E(\mathbb{Q})/E_{\text{tors}}(\mathbb{Q})$, and $H\geq L$ since $k$ is sufficiently large. By Proposition \ref{prop:count points in MW lattice} and the fact that $r$ is bounded we obtain
\begin{align}\label{eq:small d gives contradiction for ell=3}
k \ll H^{O(1)} \ll (\log k)^{O(1)} (\log d)^{O(1)}.
\end{align}
By assumption we have $\log d \leq k^{\delta/\log \log k}$, but then \eqref{eq:small d gives contradiction for ell=3} is a contradiction since $\delta>0$ is a constant and $k$ is sufficiently large.
\end{proof}

\begin{proof}[Proof of Proposition \ref{prop:mass inc for ell = 3}]
We proceed by a mass increment argument, referring the reader back to Section \ref{sec:mass increment} for the majority of the details. The elliptic curve arguments are similar to the proof of Theorem \ref{thm:ell = 3} above, but we need to be slightly more careful with issues of uniformity since the elliptic curve no longer has bounded coefficients. 

The crucial point in the mass increment argument is obtaining a contradiction from the analogue of Lemma \ref{lem:not enough distinct ai gives many points on curve}. In the present situation, this entails deducing a contradiction from the following hypotheses:
\begin{itemize}
\item $d\leq \exp(k^{\delta/\log \log k})$
\item There exists an integer $1\leq |A_0| \ll (\log k)^{O(1)}$ such that there are $\gg k/(\log k)^{O(1)}$ pairs $i,j$ with
\begin{align}\label{eq:diff of cubes}
t_i^3 - t_j^3 = A_0d^3
\end{align}
and $|t_i|,|t_j| \neq 1$.
\end{itemize}

By folding factors of $A_0$ into $d$ we may assume $A_0$ is cubefree. We then change variables
\begin{align*}
U &= 12A_0 \frac{d}{t_i-t_j}, \ \ \ \ \ \ \ V = 36A_0 \frac{t_i+t_j}{t_i-t_j}
\end{align*}
and obtain $V^2=U^3-432A_0^2$. The integer $432 A_0^2$ may be divisible by $2^6$ or $3^6$ (or both), but by dividing through as necessary and adjusting $U,V$ we arrive at the elliptic curve
\begin{align*}
E : Y^2 = X^3 - D,
\end{align*}
where $D \ll (\log k)^{O(1)}$ is a positive integer which is not divisible by the sixth power of any prime. Since the point
\begin{align*}
\left(12A_0 \frac{d}{t_i-t_j}, 36A_0 \frac{t_i+t_j}{t_i-t_j}\right)
\end{align*}
coincides with another such point (replacing $t_i$ by $t_i'$ and $t_j$ by $t_j'$) only when $t_i'=t_i, t_j' = t_j$, we see by Proposition \ref{prop:ell=3 upper bound on n} that $E$ has $\gg k/(\log k)^{O(1)}$ rational points with naive height $\leq (kd)^{O(1)}$. Since $E$ has $j$-invariant zero and discriminant $|\Delta| \ll (\log k)^{O(1)}$, the difference between the Weil height $h(P)$ and the canonical height $\hat{h}(P)$ satisfies $|h(P) - \hat{h}(P)| \ll \log \log k$ by \cite[Theorem 1.1]{Sil1990}. We deduce
\begin{align}\label{eq:lower bound on point count for ell curve}
\frac{k}{(\log k)^{O(1)}} \ll \#\{P \in E(\mathbb{Q}) : \widehat{h}(P) \leq H\},
\end{align}
where $H = C(\log k)(\log 2d)$ for some absolute constant $C>0$. Since $E$ has integral $j$-invariant, there is an absolute constant $L >0$ such that $\hat{h}(P) \geq L$ uniformly in $D$ and nontorsion $P$ by \cite{Sil1981}.

By a descent argument \cite[Proposition 2]{Fou1993} (restated in \cite[Lemma 4.1]{HelVen2006}) the rank $r$ of $E$ satisfies $r \ll 1 + \omega(D) + \log h_3(\mathbb{Q}(\sqrt{-D}))$, where $\omega(D) \ll \log \log k$ is the number of distinct primes dividing $D$ and $h_3(\mathbb{Q}(\sqrt{-D})$ is the size of the 3-torsion in the class group of $\mathbb{Q}(\sqrt{-D})$. We use the trivial bound $h_3(\mathbb{Q}(\sqrt{-D}) \leq h(-D)$, where $h(-D)$ is the class number of $\mathbb{Q}(\sqrt{-D})$. From the Dirichlet class number formula \cite[Chapter 6]{Dav2000} and the trivial bound $L(1,\chi_{-D}) \ll \log 3D$ \cite[p. 96]{Dav2000} we deduce that $r \ll \log \log k$.

We apply Proposition \ref{prop:count points in MW lattice} and \eqref{eq:lower bound on point count for ell curve} to obtain
\begin{align*}
\frac{k}{(\log k)^{O(1)}} \ll (\log k)^{O(r)} (\log d)^{O(r)}.
\end{align*}
As $r \ll \log \log k$ and $k$ is sufficiently large this implies $k^{1/2} \ll (\log d)^{O(\log \log k)}$, say. However, we have assumed the upper bound $\log d \leq k^{\delta/\log \log k}$ with $\delta>0$ sufficiently small, and this is a contradiction.
\end{proof}

\begin{proof}[Proof of Proposition \ref{prop:count points in MW lattice}]
Write $E_{\text{tors}}(\mathbb{Q})$ for the subgroup of torsion points. By work of Mazur \cite{Maz1977,Maz1978} we have $\#E_{\text{tors}}(\mathbb{Q}) \leq 16$, so the proposition is true if $r=0$. Hence, we may assume $r\geq 1$.

We reduce to counting nontorsion points. Every $P \in E(\mathbb{Q})$ can be written uniquely as $P = T + Q$, where $T$ is a torsion point and $Q \in E(\mathbb{Q})/E_{\text{tors}}(\mathbb{Q})$ is a nontorsion point. By Mazur's bound we have $\#\{P \in E(\mathbb{Q}) : \| P \| \leq H\} \leq 16\#\{Q \in E(\mathbb{Q})/E_{\text{tors}}(\mathbb{Q}) : \| T+Q \| \leq H\}$ for some torsion point $T$. As $T$ is torsion we have $\|T+Q\| = \|Q\|$. It therefore suffices to prove $\#\{P \in E(\mathbb{Q})/E_{\text{tors}}(\mathbb{Q}) : \widehat{h}(P) \leq H\} \leq (9H/L)^{r/2}$.

By work of Naccarato (see \cite[p.502--505]{Nac2021}, also \cite{BZ2004,Duj2023preprint}), we have
\begin{align}\label{eq:nac bound}
\#\{P \in E(\mathbb{Q})/E_{\text{tors}}(\mathbb{Q}) : \widehat{h}(P) \leq H\} &\leq \left(1 + 2\sqrt{H/L} \right)^r.
\end{align}
We obtain the result by noting $1+2\sqrt{H/L} \leq 3\sqrt{H/L}$, since we assume $H \geq L$.
\end{proof}

It remains to prove Proposition \ref{prop:ell=3 upper bound on n}. We do so using the following lemma, which is a weak form of a result due to Erd\H{o}s (see \cite{Erd1968}). 

\begin{lemma}\label{lem:prods all distinct}
Let $\delta > 0$ be a positive constant, and let $x$ be sufficiently large in terms of $\delta$. Let $1 \leq m_1 < m_2 < \cdots < m_T \leq x$ be distinct positive integers such that $m_i m_j \neq m_r m_s$ whenever $i \neq r, i \neq s$. Then $T \leq \delta x$.
\end{lemma}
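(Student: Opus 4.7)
My plan is to recognize the hypothesis as a multiplicative Sidon condition on $M := \{m_1, \ldots, m_T\}$ and invoke Erd\H{o}s's classical counting result for such sets.

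The condition ``$m_i m_j \neq m_r m_s$ whenever $i \neq r$ and $i \neq s$'' is equivalent to the assertion that $M$ is multiplicatively Sidon, i.e., the unordered multiset $\{m_i, m_j\}$ is determined by the product $m_i m_j$. Indeed, if $m_i m_j = m_r m_s$ with $\{i,j\} \neq \{r,s\}$ as multisets, then at least one of $i, j$ (say $i$) lies outside $\{r, s\}$, contradicting the hypothesis. It follows that the $\binom{T+1}{2}$ products $m_i m_j$ with $1 \leq i \leq j \leq T$ are pairwise distinct, and they all lie in the Erd\H{o}s multiplication table
\[
\mathcal{T}(x) := \{ab : 1 \leq a, b \leq x\} \subseteq [1, x^2].
\]

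The crux is then Erd\H{o}s's theorem on the multiplication table (see \cite{Erd1968}), which gives $|\mathcal{T}(x)| = o(x^2)$ as $x \to \infty$. Combined with the bound above, this yields
\[
\frac{T(T+1)}{2} \leq |\mathcal{T}(x)| = o(x^2),
\]
so $T = o(x)$, and the Lemma follows for $x$ sufficiently large in terms of $\delta$.

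The only real difficulty is the multiplication table bound itself. For a self-contained derivation of $|\mathcal{T}(x)| = o(x^2)$, I would rely on the Hardy-Ramanujan-Tur\'an estimates: almost all $n \leq x^2$ satisfy $\omega(n) = (1 + o(1))\log \log x$, while almost all pairs $(a, b) \in [1, x]^2$ have $\omega(a), \omega(b) = (1 + o(1)) \log \log x$ together with $\omega(\gcd(a,b)) = o(\log \log x)$ (the last from Markov's inequality applied to $\mathbb{E}[\omega(\gcd(a,b))] = \sum_p p^{-2} = O(1)$). Since $\omega(ab) = \omega(a) + \omega(b) - \omega(\gcd(a,b)) \sim 2\log \log x$ for typical pairs, and such a value is atypically large among integers $n \leq x^2$, the image $\mathcal{T}(x)$ is forced to be sparse. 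The Sidon reduction itself is a one-line computation; all of the analytic work is in the multiplication table bound.
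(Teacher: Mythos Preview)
Your argument is correct. Recognizing the hypothesis as a multiplicative Sidon condition and invoking the Erd\H{o}s multiplication table bound $|\mathcal{T}(x)| = o(x^2)$ is a clean and standard route to the conclusion, and your sketch of the Hardy--Ramanujan/normal-order proof of that bound is also fine.

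The paper takes a genuinely different path. It argues by contrapositive: assuming $T > \delta x$, it applies Proposition~\ref{prop:dense sequence has large gcds} (the ``many pairs with large GCD'' result, already proved for other purposes in the paper) to produce $\gg_\delta x$ pairs $(m_i,m_j)$ with $\gcd(m_i,m_j) > \eta x$. After pruning so that no integer appears in more than one pair, one has many pairs $(m_i,m_{i+1})$ with $m_i/d_i, m_{i+1}/d_i$ both bounded (where $d_i = \gcd(m_i,m_{i+1})$); a pigeonhole on these bounded quotient pairs then forces a coincidence $m_i m_{k+1} = m_k m_{i+1}$ with $i \neq k$, violating the Sidon property. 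The two approaches trade off differently: yours is conceptually shorter but imports the multiplication-table theorem as a black box (or requires the Hardy--Ramanujan machinery you sketched), whereas the paper's proof is entirely self-contained within results it has already established, recycling Proposition~\ref{prop:dense sequence has large gcds} rather than appealing to an external analytic estimate. The paper explicitly notes that Erd\H{o}s's own proof of this lemma is ``moderately involved,'' and presents the GCD-based argument precisely as an easier, in-house alternative.
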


\begin{proof}[Proof of Proposition \ref{prop:ell=3 upper bound on n} assuming Lemma \ref{lem:prods all distinct}]

The proof is somewhat inspired by the proof of \cite[Lemma 1]{ErdSel1975}.

We make two claims.

First, we claim that if there exist $i\neq j$ with $a_i = a_j$, then $|n| \leq 2k^{3/2}d^{9/2}$. If such $i\neq j$ exist, then $kd^3 \geq |i-j|d^3 = |a_it_i^3 - a_jt_j^3|$. We must have $t_i\neq t_j$ since $i\neq j$. If $t_i = -t_j$ then $|a_it_i^3 - a_jt_j^3| = 2a_j|t_j|^3$ since $a_i=a_j$. If $|t_i| \neq |t_j|$, then without loss of generality $|t_i| > |t_j|$ and
\begin{align*}
|a_it_i^3 - a_jt_j^3| \geq a_j (|t_i|^3-|t_j|^3) \geq a_j ((|t_j|+1)^3 - |t_j|^3) \geq a_j t_j^2.
\end{align*}
Therefore, in any case
\begin{align*}
kd^3 \geq a_jt_j^2 \geq (a_j|t_j|^3)^{2/3} = |n+jd^3|^{2/3},
\end{align*}
so $|n+jd^3| \leq k^{3/2}d^{9/2}$. By the triangle inequality we obtain
\begin{align*}
|n| = |n+jd^3 - jd^3| \leq k^{3/2}d^{9/2} + kd^3 \leq 2k^{3/2}d^{9/2},
\end{align*}
and this proves the claim.

Second, we claim that if $a_ia_j=a_ra_s$ for some indices $i\neq r, i \neq s$, then $|n| \leq 432k^6 d^{18}$. Since $a_i \mid a_r a_s$, we have
\begin{align*}
a_i = \text{gcd}(a_i,a_ra_s) \leq \text{gcd}(a_i,a_r) \cdot \text{gcd}(a_i,a_s) < k^2,
\end{align*}
the last inequality following from Lemma \ref{lem:factorization of n plus i d ell} and the fact that $i\neq r,i \neq s$. If $(n+id^3)(n+jd^3)=(n+rd^3)(n+sd^3)$ then $a_ia_jt_i^3t_j^3 = a_ra_s t_r^3t_s^3$, and therefore $t_it_j = t_rt_s$. Then $t_i \mid t_rt_s$, but $\text{gcd}(t_i,t_rt_s)=1$ by Lemma \ref{lem:factorization of n plus i d ell} again, so $t_i = \pm 1$. Then $|n+id^3| = a_i < k^2$, so
\begin{align*}
|n| \leq |n+id^3| + id^3 \leq k^2 + kd^3 \leq 2k^2d^3.
\end{align*}
Now assume $(n+id^3)(n+jd^3)\neq (n+rd^3)(n+s d^3)$. We deduce
\begin{align*}
d^6(ij-rs) + nd^3 (i+j-r-s) &= (n+id^3)(n+jd^3)- (n+rd^3)(n+s d^3) \\ 
&= a_ia_j ((t_it_j)^3-(t_rt_s)^3).
\end{align*}
The left-hand side has absolute value $\leq k^2d^6 + 2k d^3|n|\leq 3k^2d^6|n|$. Since $t_it_j \neq t_rt_s$, we see that if $|t_rt_s| = |t_it_j|$, then the right-hand side has absolute value 
\begin{align*}
\geq 2a_ia_j |t_it_j|^3 \geq |n+id^3| \cdot |n+jd^3| \geq (|n|-kd^3)^2.
\end{align*}
Otherwise, we may assume without loss of generality that $|t_rt_s| > |t_it_j|$, and then the right-hand side is
\begin{align*}
&\geq a_ia_j ((|t_it_j|+1)^3-|t_it_j|^3) \geq a_ia_j |t_it_j|^2 \geq (a_i |t_i|^3 a_j |t_j|^3)^{2/3} \\
&= (|n+id^3| \cdot |n+jd^3|)^{2/3} \geq (|n| - kd^3)^{4/3}.
\end{align*}
In any case, the right-hand side is therefore $\geq (|n|-kd^3)^{4/3}$. If $|n| \leq 2kd^3$ then we already have a suitable upper bound on $|n|$, and if $|n| \geq 2kd^3$ then $|n|-kd^3 \geq |n|/2$, and we find $|n|/2 \leq (3k^2d^6|n|)^{3/4}$. This implies $|n| \leq 432 k^6d^{18}$, which proves the second claim.

By Lemma \ref{lem:subset divides factorial}, there exists a set $S \subseteq \{0,1,\ldots,k-1\}$ with $|S| \geq k-\pi(k)$ such that
\begin{align*}
\prod_{i \in S} a_i \leq k!.
\end{align*}
Now assume for contradiction that $|n| > 432 k^6 d^{18}$. By the contrapositive of the two claims above, this implies that all the $a_i$ are distinct from one another, and that $a_ia_j \neq a_ra_s$ whenever $i \neq r,i \neq s$.  If we relabel the $a_i$ so that $1\leq a_1 < a_2 < \cdots$, then Lemma \ref{lem:prods all distinct} with $\delta = 1/10$, say, implies $a_i > 3i$ for all $i$ sufficiently large.

 We therefore have
\begin{align*}
k!\geq \prod_{i \in S} a_i \geq \prod_{i=i_0}^{k-\pi(k)} (3i)
\end{align*}
for some large constant $i_0$. By Stirling's formula and the prime number theorem we find $k! \geq  3^k e^{-(1+o(1))k} k!$, and this is a contradiction for $k$ sufficiently large.
\end{proof}

Erd\H{o}s's proof of (his version of) Lemma \ref{lem:prods all distinct} is moderately involved and relies on arguments from graph theory. We give an easier proof using Proposition \ref{prop:dense sequence has large gcds}.

\begin{proof}[Proof of Lemma \ref{lem:prods all distinct}]
We proceed by contrapositive, so assume $T > \delta x$. We apply Proposition \ref{prop:dense sequence has large gcds} with $c = \delta, A = \exp((\varepsilon \delta)^{-1})$, and $\eta = A^{-2}$, where $\varepsilon>0$ is a sufficiently small constant. Then there are $> \frac{\delta}{3} x$ ordered pairs $(m_i,m_j)$ of distinct integers with $\text{gcd}(m_i,m_j) > \eta x$.

We first show any integer $m$ appears in $\leq \eta^{-3}$ of the ordered pairs. After relabeling, we may assume that $\text{gcd}(m,m_j) > \eta x$ for $1\leq j \leq J$ and distinct integers $m_j$ with $m_j \neq m$. We wish to prove $J\leq \eta^{-3}$, so assume for contradiction that $J > \eta^{-3}$. The integer $m$ has $\leq \eta^{-1}$ divisors $> \eta x$, so by the pigeonhole principle there are $> \eta^{-2}$ choices of $j$ such that $\text{gcd}(m,m_j) = d > \eta x$, for some fixed divisor $d$ of $m$. We apply the pigeonhole principle to this set of $j$'s to find distinct $j$ and $j'$ with $|m_j - m_{j'}| \ll \eta^2 x$. Since $\eta x < d \leq |m_j - m_{j'}| \ll \eta^2 x$ and $\eta$ is small, we obtain a contradiction. It follows that $J \leq \eta^{-3}$.

We now iteratively build a set $S$ of ordered pairs such that each integer appears in at most one element of $S$. First, choose some ordered pair $o_1$ of integers as above and add it to $S$. Remove from consideration the $\leq 2\eta^{-3}$ other ordered pairs which could have an integer in common with $o_1$. If $S = \{o_1,\ldots,o_n\}$, then there are $> \frac{\delta}{3}x - 2n\eta^{-3}-n$ ordered pairs from which we may still choose. It follows that we may construct $S$ such that $|S| > \lambda x$, where $\lambda>0$ is a sufficiently small constant depending on $\eta$.

Consider now all the ordered pairs in $S$. By relabeling, we may write $\text{gcd}(m_i,m_{i+1}) = d_{i}> \eta x$, where $1\leq i \leq |S|/2$. Furthermore, the integers $m_i$ appearing in the ordered pairs are all distinct. Since $m_i / d_i,m_{i+1}/d_i < \eta^{-1}$ and $x$ is sufficiently large, the pigeonhole principle yields the existence of $i \neq k$ with $\left(m_i/d_i, m_{i+1}/d_i \right) = \left(m_k/d_k, m_{k+1}/d_k \right)$. Thus $m_i/d_i = m_k / d_k$ and $m_{i+1}/d_i = m_{k+1}/d_k$, and it follows that
\begin{align*}
\frac{m_i}{d_i} \frac{m_{k+1}}{d_k} = \frac{m_k}{d_k}\frac{m_{i+1}}{d_i},
\end{align*}
so $m_i m_{k+1} = m_k m_{i+1}$. Since $i\neq k, i \neq i+1$, we obtain the desired result.
\end{proof}

\section*{Acknowledgements}

The author began this work while supported by a post-doctoral research fellowship at All Souls College, University of Oxford. The author is supported by National Science Foundation grant DMS-2418328.

\bibliographystyle{plain}
\bibliography{refs}

\end{document}